\theoremstyle{plain}
\newtheorem{theorem}{Theorem}[section]
\newtheorem{cor}[theorem]{Corollary}
\newtheorem{lem}[theorem]{Lemma}
\newtheorem{pro}[theorem]{Proposition}
\newtheorem{rem}[theorem]{Remark}
\newtheorem*{1}{Theorem}
\numberwithin{equation}{section}
\newcommand{\C}{\mathbb{C}}
\newcommand{\R}{\mathbb{R}}
\newcommand{\Lcal}{\mathcal{L}}
\newcommand{\Jcal}{\mathcal{J}}
\newcommand{\Hcal}{\mathcal{H}}
\newcommand{\Tcal}{\mathcal{T}}
\newcommand{\Mcal}{\mathcal{M}}
\newcommand{\WP}{Weil-Petersson}
\newcommand{\WPm}{Weil-Petersson metric}
\newcommand{\WPg}{Weil-Petersson geodesic}
\newcommand{\hym}{hyperbolic metric} 
\newcommand{\km}{K\"{a}hler metric} 
\newcommand{\kf}{K\"{a}hler form} 
\newcommand{\kp}{K\"{a}hler potential} 
\newcommand{\qf}{quasi-Fuchsian}
\newcommand{\TS}{Teichm\"{u}ller space}
\newcommand{\Tt}{Teichm\"{u}ller theory}
\newcommand{\Tc}{Teichm\"{u}ller curve}
\newcommand{\hqd}{holomorphic quadratic differential}
\newcommand{\RS}{Riemann surface}
\newcommand{\hmp}{harmonic map}
\newcommand{\maxp}{maximum principle}
\newcommand{\Sc}{sectional curvature}
\newcommand{\pc}{principal curvature}
\newcommand{\ct}{curvature tensor}
\newcommand{\cs}{conformal structure}
\newcommand{\Cs}{Christoffel symbol}
\newcommand{\cd}{complex dimension}
\newcommand{\Bd}{Beltrami differential}
\newcommand{\hBd}{harmonic Beltrami differential}
\newcommand{\sff}{second fundamental form}
\newcommand{\Hd}{Hopf differential}
\newcommand{\ts}{tangent space}
\newcommand{\tp}{tangent plane}
\newcommand{\tv}{tangent vector}
\newcommand{\tm}{3-manifold}
\newcommand{\ms}{moduli space}
\newcommand{\sbd}{surface bundle}
\newcommand{\ppl}[2]{\frac{\partial{#1}}{\partial{#2}}}
\begin{document}

\title{Curvatures on the Teichm\"{u}ller Curve}
\author{Ren Guo}
\address{School of Mathematics, University of Minnesota, Minneappolis, MN 55455, USA}
\email{guoxx170@math.umn.edu}

\author{Subhojoy Gupta}
\address{Department of Mathematics, Yale University, New Haven, CT 06520, USA}
\email{subhojoy.gupta@yale.edu}

\author{Zheng Huang}
\address{Department of Mathematics, 
The City University of New York, 
Staten Island, NY 10314, USA.}
\email{zheng.huang@csi.cuny.edu}

\date{January 16, 2010}

\subjclass[2000]{Primary 32G15, Secondary 53C43, 53C21}

\keywords{Teichm\"{u}ller space, Teichm\"{u}ller curve, sectional curvature, Weil-Petersson geodesic}

\begin{abstract}
The {\Tc} is the fiber space over {\TS} $T_g$ of closed {\RS}s, where the fiber over a point $(\Sigma,\sigma) \in T_g$ 
is the underlying surface $\Sigma$. We derive formulas for {\Sc}s on the {\Tc}. In particular, our method can be applied 
to investigate the geometry of the {\WPg} as a three-manifold, and the degeneration of the curvatures near the infinity of 
the augmented {\TS} along a {\WPg}, as well as the minimality of hyperbolic surfaces in this {\tm}.
\end{abstract}

\maketitle

\section {Introduction} 
{\TS} $T_{g}$ is the space of {\hym}s on $\Sigma$, modulo an equivalent relationship, where two {\cs}s $\sigma$ and 
$\rho$ are considered equivalent if there is a biholomorphic map between $(\Sigma,\sigma)$ and $(\Sigma,\rho)$, in 
the homotopy class of the identity. Here and throughout this paper, we always assume $\Sigma$ is a smooth, oriented, 
closed {\RS} of genus $g > 1$.

{\TS} is a complex manifold of {\cd} $3g - 3$ (\cite {Ahl61}),  and the co{\ts} at a base point $(\Sigma, \sigma)$ is identified 
with $Q(\sigma)$, the space of {\hqd}s on $(\Sigma, \sigma)$. Let $\{\phi_1,\cdots,\phi_{3g-3}\}$ be a basis of $Q(\sigma)$. 
The local coordinates of the {\TS} $T_g$ in the neighborhood of $(\Sigma, \sigma)$ are given by 
$(t^1,t^2,...,t^{3g-3}) \in \C^{3g-3}$. A generic {\hqd} $\phi dz^2$ on $(\Sigma, \sigma)$ is written as 
$\phi dz^2=\sum_{k=1}^{3g-3}t^k\phi_k dz^2$, where $z$ is the conformal coordinate on $(\Sigma,\sigma)$.

Throughout the paper, $\sigma$ and $\rho$ are {\cs}s, with conformal coordinates $z$ and $w$, respectively. We also 
denote $g_{\sigma}dzd\bar{z}$ and $g_{\rho}dwd\overline{w}$ as the {\hym}s on $(\Sigma,\sigma)$ and $(\Sigma, \rho)$, 
respectively. Corresponding to each $\phi dz^2$, there is a {\hym} $g_{\rho}dwd\overline{w}$ on $\Sigma$, from the work 
of Samson-Wolf \cite{Sam78, Wol89}: given a pair of points (two {\cs}s) $(\sigma, \rho)$ in {\TS}, there is a unique {\hmp} 
$w: (\Sigma, \sigma) \to (\Sigma, \rho)$ in the homotopy class of identity, and $\phi dz^2$ is the {\Hd} of the map $u$. This 
induces a homeomorphism $\phi: T_g \to Q(\sigma)$ which sends $\rho$ to $\phi dz^2$. Above correspondence is 
well-defined via the inverse of this homeomorphism.

The {\Tc} $\Tcal_g$ is the fiber space over the {\TS} $T_g$ of closed {\RS}s, where the fiber over a point 
$(\Sigma,\sigma) \in T_g$ is the underlying surface $\Sigma$. This is a manifold of real dimension $6g-4$. In this paper, 
we obtain curvature formulas for a general Riemannian metric on the {\Tc} $\Tcal_g$, and use this to study the geometry 
of a Riemannian {\tm} formed by a {\WPg}, particularly, the degeneration of its curvatures as the {\WPg} heads towards the 
boundary of the augmented {\TS}.

A point in $\Tcal_g$ is represented as $(\sigma,z_0)$, where $\sigma \in T_g$ and $z_0$ is a point on the marked surface 
$(\Sigma,\sigma)$. Let $\pi: T\Tcal_g \to TT_g$ be the fiberation and the kernel of the differential map 
$d\pi:T\Tcal_g \to TT_g$ defines a line bundle $\nu$ over the {\Tc} $\Tcal_g$. Wolpert \cite{Wol86} calculated the Chern 
form $c_{1}(v)$ of such a line bundle and showed that it is a negative differential $2$-form. He suggested to define a {\km} 
$G$ on the {\Tc} $\Tcal_g$ such that its {\kf} is $-c_{1}(v)$. The {\kp} of $G$ is given by $\log\|{\ppl {}{w}}\|$, where 
$\|\cdot\|$ is the length of $\cdot$ with respect to the {\hym} on a fiber $(\Sigma, \rho(w))$. Here $\rho(w)$ is the {\cs} 
$\rho$ with the conformal coordinate $w$. When restricted to each fiber $(\Sigma, \rho(w))$, the metric $G$ is 
${\frac{1}{2}}g_{\rho(w)}dwd\overline{w}$. When evaluated at $\sigma \in T_g$, the metric 
$G = \frac12\sigma dzd\overline{z}+ \sum_{k,\ell=1}^{3g-3}\delta_{k\ell}dt^k d\overline{t^{\ell}}$. Under this metric $G$ on 
the {\Tc} $\Tcal_g$, Jost calculated the holomorphic {\Sc} in the fiber direction:

\begin{1}\cite{Jos91a}
Under metric $G$, at the base point $(\sigma,z_0)$, the {\Sc} of the tangent plane 
expanded by $\frac{\partial}{\partial z}$ and $\sqrt{-1}\frac{\partial}{\partial z}$ is 
\begin{align}\label{J}
K(\frac{\partial}{\partial z}, \sqrt{-1}\frac{\partial}{\partial z}) = -1+\sum_{\ell=1}^{3g-3}|\mu_{\ell}|^2(z_0).
\end{align}
where $\{\mu_{\ell}(z)\}$ is a basis for the {\ts}, denoted by $B_{h}(\sigma)$, of {\TS} at $(\Sigma,\sigma)$, normalized 
according to $D(\mu_{\alpha}\overline{\mu}_{\beta})(z_0) =\delta_{\alpha\beta}$, for the operator 
$D = -2(\Delta_{g_{\sigma}}-2)^{-1}$ on the hyperbolic surface $(\Sigma, g_{\sigma}dzd\bar{z})$.
\end{1}

In the present work, using a general Riemannian metric on $\Tcal_g$, we find the same curvature formula holds in the fiber 
directions. Moreover, we determine the {\Sc}s of the directions spanned by one fiber direction and the other by a tangent 
vector in {\TS}.

Denote $z=x+\sqrt{-1}y$ and $t^{\ell}=x^{\ell}+\sqrt{-1}y^{\ell}.$ We consider a Riemannian metric $G'$ on the {\Tc} $\Tcal_g$:
\begin{equation}\label{metric}
G'= g_{\rho(w)}dwd\overline{w}+ \sum h_{\alpha\beta}(w)d\nu^\alpha d\nu^\beta
\end{equation}
where $\nu^\alpha, \nu^\beta\in \{x^1, y^1,...,x^{3g-3}, y^{3g-3}\}$. The metric $G'$ restricted on the fiber 
$(\Sigma,\rho)$ is identified with the {\hym} $g_{\rho(w)}dwd\overline{w}$.

\begin{theorem}\label{thm:curve}
With respect to the metric $G'$ on the {\Tc} $\Tcal_g$,  we assume functions $\{h_{\alpha\beta}\}$ satisfy the 
following: 
\begin{equation}\label{h1}
\sum h_{\alpha\beta}(z)d\nu^\alpha d\nu^\beta=2\sum_{\ell=1}^{3g-3}((dx^\ell) ^2+(dy^\ell) ^2), \forall z \in (\Sigma, \sigma),
\end{equation}
i.e., it is Euclidean when restricted on $(\Sigma, \sigma)$ in $T_g$. Then at the base point $(\sigma, z_0)$, the {\Sc} of 
the {\tp} expanded by $\frac{\partial}{\partial x}$ and $\frac{\partial}{\partial y}$ is 
\begin{align}\label{k-fiber}
K(\frac{\partial}{\partial x}, \frac{\partial}{\partial y}) = -1+\sum_{\ell=1}^{3g-3}|\mu_\ell|^2(z_0),
\end{align}
where $\mu_\ell=\frac{\overline{\phi_{\ell}}}{g_{\sigma}}$ and the basis $\{\phi_{\ell}dz^2\}$ on $Q(\sigma)$ is chosen such 
that \eqref{h1} holds.
\end{theorem}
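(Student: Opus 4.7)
The plan is to compute the {\Sc} $K(\partial/\partial x,\partial/\partial y)$ directly from the Riemann curvature tensor of $G'$ at $(\sigma, z_0)$. Introduce local coordinates $(x, y, x^1, y^1, \ldots, x^{3g-3}, y^{3g-3})$ adapted to the bundle structure. Since $G'$ has no $dw\, d\nu^\alpha$ cross terms, the metric matrix is block diagonal at $(\sigma,z_0)$: the fiber block equals $g_\sigma(z_0)\, I_2$ and, by assumption \eqref{h1}, the Teichm\"uller block equals $2\, I_{6g-6}$. The derivatives of the metric components at the base point fall into two classes: (i) derivatives in $x,y$ of the fiber hyperbolic density $g_\sigma$, reflecting the intrinsic geometry of the fiber; and (ii) derivatives in the Teichm\"uller directions $t^\ell$ of the hyperbolic density $g_{\rho(w)}$, reflecting how the hyperbolic structure changes with the conformal class.

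Next, compute the Christoffel symbols at $(\sigma,z_0)$. The intrinsic ones — those involving only fiber coordinates — reproduce the connection of the hyperbolic fiber metric and will supply the $-1$ term in \eqref{k-fiber}. The extrinsic ones of the form $\Gamma^{t^\ell}_{xx},\Gamma^{t^\ell}_{xy},\Gamma^{t^\ell}_{yy}$ encode the ``bending'' of the fiber inside $\Tcal_g$ and are controlled by $\partial g_{\rho(w)}/\partial t^\ell$ evaluated at $(\sigma,z_0)$.

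The main technical input is harmonic map theory. By the Sampson-Wolf parametrization, each direction $\partial/\partial t^\ell$ at $\sigma$ is realized by the {\hBd} $\mu_\ell = \overline{\phi_\ell}/g_\sigma$, where $\phi_\ell dz^2$ is the Hopf differential of the associated harmonic map. Using this correspondence, one identifies $\partial g_{\rho(w)}/\partial t^\ell$ at $(\sigma,z_0)$ explicitly in terms of $\mu_\ell(z_0)$. The normalization of the basis $\{\phi_\ell\}$ for which \eqref{h1} holds takes the place of the normalization $D(\mu_\alpha\bar\mu_\beta)(z_0)=\delta_{\alpha\beta}$ used by Jost.

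Finally, assemble $R(\partial/\partial x,\partial/\partial y,\partial/\partial x,\partial/\partial y)$ via the standard formula in Christoffel symbols and their first derivatives. The intrinsic contribution produces the Gauss curvature $-1$ of the fiber hyperbolic metric, while the quadratic combinations of the extrinsic Christoffel symbols contribute $+\sum_\ell |\mu_\ell(z_0)|^2$ after using the identifications from step (ii). Dividing by $|\partial/\partial x \wedge \partial/\partial y|^2 = g_\sigma(z_0)^2$ yields \eqref{k-fiber}. The main obstacle is step (ii): computing the variation of the fiber hyperbolic density along Teichm\"uller directions using the harmonic map parametrization, and verifying that the quadratic combination of extrinsic Christoffel symbols reassembles cleanly into $\sum|\mu_\ell|^2$.
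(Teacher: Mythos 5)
Your proposal follows essentially the same route as the paper: identify the fibers with $(\Sigma,\sigma)$ via the Sampson--Wolf harmonic-map parametrization, so that the first $t^\ell$-variation of the fiber metric at $t=0$ is given by the Hopf differentials $\phi_\ell dz^2+\overline{\phi}_\ell d\bar z^2$ (Wolf's formulas give $e|_0=1$ and vanishing first variation of the energy density, which is exactly the identification you call step (ii)), then compute Christoffel symbols and $R_{xyyx}$, with the intrinsic fiber terms producing $-1$ and the quadratic products of the mixed symbols $\Gamma^{x^\ell}_{xy},\Gamma^{x}_{x x^\ell}$, etc., producing $\sum_\ell|\mu_\ell|^2$ after dividing by $g_\sigma^2$. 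This matches the paper's proof in both structure and the key technical input, so the plan is correct as written.
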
 

If we require further that functions $\{h_{\alpha\beta}\}$ satisfy the following: 
\begin{equation}\label{h2}
\sum h_{\alpha\beta}(w)d\nu^\alpha d\nu^\beta=2\sum_{\ell=1}^{3g-3}((dx^\ell)^2+(dy^\ell)^2), \forall \rho(w) \in T_g,
\end{equation}
i.e., a global Euclidean structure rather than a local one as in \eqref{h1}, then we have: 

\begin{theorem}\label{thm:ft}
With respect to the metric $G'$ on the {\Tc} $\Tcal_g$,  we choose the basis  $\{\phi_{\ell}dz^2\}$ on $Q(\sigma)$ 
such that \eqref{h2} holds for functions $\{h_{\alpha\beta}\}$ in $G'$. Then at the base point $(\sigma, z_0)$, the 
{\Sc}s of the {\tp}s expanded by $\frac{\partial}{\partial x}$ and ${\frac{\partial}{\partial x^\ell}}$, 
$\ell = 1, 2, \cdots, 3g-3$, are:
\begin{align}\label{fib-tan1}
K(\frac{\partial}{\partial x}, {\frac{\partial}{\partial x^\ell}}) = -{\frac{1}{2}}D(|\mu_{\ell}|^2)(z_0), 
\end{align}
where $\mu_{\ell} =\frac{\bar{\phi}_{\ell}}{g_{\sigma}}$.
Similarly,
\begin{align}\label{fib-tan2}
K(\frac{\partial}{\partial y}, {\frac{\partial}{\partial x^\ell}}) = K(\frac{\partial}{\partial x}, {\frac{\partial}{\partial y^\ell}}) 
= K(\frac{\partial}{\partial y}, {\frac{\partial}{\partial y^\ell}}) = -{\frac{1}{2}}D(|\mu_{\ell}|^2)(z_0).
\end{align}
They are all non-positive.
\end{theorem}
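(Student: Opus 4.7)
The plan is a direct coordinate computation of the Riemann curvature tensor of $G'$ at the base point $(\sigma, z_0)$, following the template of Jost's fiber computation but adapted to the mixed fiber--base planes.

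At $(\sigma, z_0)$ the metric is block diagonal, with fiber block $g_\sigma(z_0)(dx^2 + dy^2)$ and base block $2\sum_\ell ((dx^\ell)^2 + (dy^\ell)^2)$. By the stronger hypothesis \eqref{h2}, the base block is constant globally on $\Tcal_g$, so all its partial derivatives vanish everywhere; the only non-constant entries of $G'$ are the components of $g_{\rho(w)}$, which depends both on the fiber coordinate $w$ and on the base parameters $t = (t^1, \ldots, t^{3g-3})$ through the choice of hyperbolic structure. It is here that the normalization of the basis $\{\phi_\ell\}$ built into \eqref{h2} plays its role, since it pins down the identification between infinitesimal directions in $T_g$ and the harmonic Beltrami differentials $\mu_\ell = \overline{\phi_\ell}/g_\sigma$ that will appear in the answer.

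Next, I would compute the Christoffel symbols at $(\sigma, z_0)$, noting that they involve only first partial derivatives of $g_{\rho(w)}$. Plugging these into the standard coordinate formula for the Riemann tensor, the sectional component $R_{x\, x^\ell\, x^\ell\, x}$ reduces to a second derivative $\partial_{x^\ell}^2 g_{\rho(w)}$ evaluated at the base point, plus a Christoffel--product correction that can be evaluated pointwise from the hyperbolic metric on $(\Sigma, \sigma)$ alone. The key input is the variational formula for the hyperbolic conformal factor along a Teichm\"uller direction: using the Sampson--Wolf harmonic--map parametrization recalled in the introduction, the second variation of $g_{\rho(w)}$ in the $t^\ell$ direction is given by an expression of the form $g_\sigma\cdot D(|\mu_\ell|^2)$, with $D = -2(\Delta_{g_\sigma} - 2)^{-1}$, up to lower order contributions absorbed by the Christoffel correction. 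Normalizing $R_{x\, x^\ell\, x^\ell\, x}$ by the area factor $2\,g_\sigma(z_0)$ of the plane then produces \eqref{fib-tan1}.

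I expect the principal technical obstacle to be the precise derivation of this variational identity for $g_{\rho(w)}$: it relies on the same $(\Delta_{g_\sigma} - 2)^{-1}$ analysis that underlies Jost's formula \eqref{J}, but propagated through a \emph{cross} derivative rather than a pure fiber one, and the bookkeeping of the Christoffel--product remainders must be carried out carefully so that exactly the $D(|\mu_\ell|^2)$ term survives the cancellation. Once \eqref{fib-tan1} is established, the three equalities in \eqref{fib-tan2} follow by the obvious $x \leftrightarrow y$ and $x^\ell \leftrightarrow y^\ell$ symmetry of the metric and the basis, and non-positivity is inherited from the positivity of $D = 2(2 - \Delta_{g_\sigma})^{-1}$ acting on the non-negative function $|\mu_\ell|^2$ on the closed hyperbolic surface $(\Sigma, g_\sigma)$.
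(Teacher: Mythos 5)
Your plan is essentially the paper's own proof: a direct coordinate computation of $R_{x\,x^\ell\,x^\ell\,x}$ via the Christoffel symbols of the pulled-back metric, with Wolf's second-variation formula for the energy density supplying the $(D+1)|\mu_\ell|^2$ term whose pointwise part cancels against the Christoffel products to leave $-g_\sigma D(|\mu_\ell|^2)$, normalized by the area factor $2g_\sigma$, and the remaining planes handled by the $x\leftrightarrow y$, $x^\ell\leftrightarrow y^\ell$ symmetry. The only minor divergence is the non-positivity step, where you invoke positivity of $D=2(2-\Delta_{g_\sigma})^{-1}$ on non-negative functions while the paper cites Wolf's pointwise bound $3D(|\mu|^2)\ge|\mu|^2$; both are valid.
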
 

One of the motivations of this paper is to study the {\WPg}s in {\TS} from its intrinsic geometry. As {\cs}s of a 
topological surface travel along a curve in {\TS}, they form a three-space which is homeomorphic to 
$\Sigma \times \R$. We would like to set up as follows to study the shape of a {\WPg}: we take one tangent vector 
$\mu_0 {\frac{d\bar{z}}{dz}}$ at the point $\sigma \in T_g$. Consider the {\WPg} $\gamma = \gamma(s)$ in {\TS} 
$T_g$ through the point $\sigma$ and in the direction $\mu_0 {\frac{d\bar{z}}{dz}}$. We consider the germ 
$N_{\sigma}$ at $\sigma$, and a natural local metric denoted by $H$, near $t=0$, takes a very simple form:  
\begin{equation}\label{H}
H = g_{\rho(w)}dwd\overline{w} + dt^2,
\end{equation}
where $g_{\rho(w)}dwd\overline{w}$ is the pull-back metric will be made clear in $(3.1)$.

\begin{theorem}\label{thm:g}
At any point $z_0 \in (\Sigma, \sigma)$, we determine the {\Sc}s of the {\tp}s spanned by two of the three vectors 
$\frac{\partial}{\partial x}$, $\frac{\partial}{\partial y}$, and $\frac{\partial}{\partial t}$, with respect to the metric $H$ 
on a {\WPg} $\gamma$ through $\sigma$ in the direction of $\mu_0 {\frac{d\bar{z}}{dz}} \in B_h(\sigma)$, where there is 
a $\phi_0dz^2 \in Q(\sigma)$ such that $\mu_0 {\frac{d\bar{z}}{dz}}=\frac{\bar{\phi}_0d\bar{z}^2}{g_\sigma |dz|^2}$:
\begin{enumerate}
\item
$K(\frac{\partial}{\partial x}, \frac{\partial}{\partial y})(\sigma,z_0) = -1+|\mu_0(z_0)|^2$;
\item
$K(\frac{\partial}{\partial x}, \frac{\partial}{\partial t})(\sigma,z_0) = 
K(\frac{\partial}{\partial y}, \frac{\partial}{\partial t}) (\sigma,z_0)= -D(|\mu_0|^2)(z_0)$.
\end{enumerate}
\end{theorem}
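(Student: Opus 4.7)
The plan is to exploit the block-diagonal structure of $H = g_{\rho(w)}\,dw\,d\bar w + dt^2$ and reduce the computation to the settings of Theorems \ref{thm:curve} and \ref{thm:ft}, now restricted to the one-dimensional base supplied by the {\WPg}. The fiber is orthogonal to the $t$-direction, and the base carries the Euclidean metric $dt^2$.

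For part (1), Theorem \ref{thm:curve} applies essentially verbatim. Its hypothesis \eqref{h1} requires the base metric to be Euclidean at $\sigma$, which is automatic for the one-dimensional factor $dt^2$. The tangent direction to the base at $\sigma$ is the single {\hBd} $\mu_0$, so the sum in \eqref{k-fiber} collapses to the single term $|\mu_0(z_0)|^2$, yielding $K(\partial/\partial x,\partial/\partial y)(\sigma,z_0) = -1+|\mu_0(z_0)|^2$. Equivalently, one may argue via the Gauss equation for the fiber $(\Sigma,\sigma)$ as a submanifold of $(N_\sigma,H)$ with unit normal $\partial/\partial t$: the intrinsic curvature is $-1$, and the second fundamental form coming from the variation of $g_{\rho(t)}$ along the {\WPg} contributes exactly $|\mu_0(z_0)|^2$.

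For part (2), the equality $K(\partial/\partial x,\partial/\partial t) = K(\partial/\partial y,\partial/\partial t)$ at $(\sigma,z_0)$ follows from the isotropy of $g_\sigma|dz|^2$ at $z_0$, together with the observation that $H$ depends on $t$ only through the fiber coefficient $g_{\rho(t)}$; it therefore suffices to compute one of them. I plan to mimic the computation underlying Theorem \ref{thm:ft}, with the pair of coordinates $(x^\ell,y^\ell)$ on the base of \eqref{h2} replaced by the single real coordinate $t$. Due to the block-diagonal form of $H$, at $(\sigma,z_0)$ the relevant Christoffel symbols and Riemann tensor entries reduce to expressions involving $\partial_t g_{\rho(t)}$ and $\partial_t^2 g_{\rho(t)}$ at $t=0$. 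Since $t$ parametrizes the {\WPg} in direction $\mu_0$, Wolf's harmonic map parametrization furnishes the first-order variation of $g_{\rho(t)}$ in terms of $\mu_0$, and differentiating the Liouville equation $\Delta_\sigma\log g_\sigma = 2g_\sigma$ along the flow produces a linearized PDE whose inverse is precisely the operator $D = -2(\Delta_\sigma - 2)^{-1}$, yielding $-D(|\mu_0|^2)(z_0)$. The coefficient $-1$ here (versus the $-\tfrac12$ of Theorem \ref{thm:ft}) reflects the normalization $dt^2$ in $H$ as opposed to $2((dx^\ell)^2+(dy^\ell)^2)$ in hypothesis \eqref{h2}.

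The principal technical obstacle is the precise evaluation of $\partial_t^2 g_{\rho(t)}|_{t=0}$ at $z_0$. Because $t$ parametrizes a {\WPg} rather than a coordinate curve on $T_g$, the second $t$-derivative picks up a correction from the {\WP} Christoffel symbols at $\sigma$; tracking this correction together with the $t$-dependence of the conformal coordinate $w$ through the harmonic map $(\Sigma,\sigma)\to(\Sigma,\rho(t))$ is where Wolf-type variational theory enters decisively to produce the inverse-Laplacian operator $D$.
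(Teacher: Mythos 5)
Your route is essentially the paper's: pull the fiber metric back along Wolf's harmonic-map ray $\phi(t)=t\phi_0$, so that $H$ becomes $(g_\sigma e+2t\,\mathrm{Re}\,\phi_0)dx^2-4t\,\mathrm{Im}\,\phi_0\,dx\,dy+(g_\sigma e-2t\,\mathrm{Re}\,\phi_0)dy^2+dt^2$, and then redo the Christoffel/curvature computation of Theorems \ref{thm:curve} and \ref{thm:ft} with the single real parameter $t$ in place of the base coordinates, feeding in Wolf's variations $e(0)=1$, $e'(0)=0$, $e''(0)=2(D+1)|\mu_0|^2$. Your Gauss-equation argument for part (1) is a valid alternative (the shape operator at $t=0$ is traceless with determinant $-|\mu_0|^2$, giving $-1+|\mu_0|^2$), and it is in fact safer than invoking Theorem \ref{thm:curve} ``verbatim'': hypothesis \eqref{h1} has base coefficient $2$ and full complex directions, whereas here the base is the single real direction $dt^2$; the formulas agree only because the two half-contributions of $x^\ell,y^\ell$ with coefficient $2$ add up to the one full contribution of $t$ with coefficient $1$. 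Your explanation of the factor $\tfrac12$ discrepancy with Theorem \ref{thm:ft} in part (2) is exactly right.

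Two steps need repair. First, the equality $K(\frac{\partial}{\partial x},\frac{\partial}{\partial t})=K(\frac{\partial}{\partial y},\frac{\partial}{\partial t})$ does not follow from ``isotropy of $g_\sigma|dz|^2$ at $z_0$'': the germ $N_\sigma$ is not rotationally symmetric, because the Hopf-differential terms $t\phi_0dz^2+t\bar\phi_0d\bar z^2$ single out directions (the second fundamental form of the fiber has distinct principal directions determined by $\arg\phi_0$). The equality is true, but it is obtained, as in the paper, by simply carrying out the second computation ($R_{2332}$ alongside $R_{1331}$); the direction-dependent quantities $\mathrm{Re}\,\phi_0$ and $\mathrm{Im}\,\phi_0$ recombine into $|\phi_0|^2$ in both cases. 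Second, the ``principal technical obstacle'' you describe, a correction to $\partial_t^2 g_{\rho(t)}$ coming from Weil--Petersson Christoffel symbols because $t$ parametrizes a geodesic, is not actually present: the metric $H$ of \eqref{H} is by definition the pullback \eqref{pb} along the ray $\phi(t)=t\phi_0$, and the identification of this ray with the Weil--Petersson geodesic germ is precisely the cited fact (Ahlfors) that the two agree to second order at $t=0$, which is all the curvature at $t=0$ sees. Hence the second $t$-derivative entering $\Gamma^1_{13,3}$ is just $g_\sigma e''(0)=2g_\sigma(D+1)|\mu_0|^2$, with no correction term, and the cancellation against the quadratic Christoffel terms $-|\mu_0|^2$ yields $-D(|\mu_0|^2)$ directly. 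With these two points fixed, your plan coincides with the paper's proof.
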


Moreover, we define a Riemannian structure on the surface bundle $N = \bigcup_{\sigma \in \gamma}N_\sigma$ over 
the {\WPg} $\gamma$, and find
\begin{theorem}
Let $\gamma$ be a {\WPg} in {\TS}. Then there exists a choice of a metric in each conformal class along $\gamma$, such 
that the surface bundle $N$ over $\gamma$ acquires a Riemannian metric whose germ at each fiber is $N_\sigma$ as in 
\eqref{H}. In particular,
\begin{enumerate}
\item
each fiber over $\sigma$ is a hyperbolic surface;
\item
each fiber over $\sigma$ is also a minimal surface;
\item 
the sectional curvatures of the metric are given in the Theorem 1.3. 
\end{enumerate}
\end{theorem}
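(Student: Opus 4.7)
My plan is to build the Riemannian metric on $N$ by combining the \hym\ on each fiber with a carefully selected horizontal lift of $\partial/\partial t$, then verify the three conclusions in turn. First, parametrize $\gamma$ by its {\WP} arc length, which supplies a transverse coordinate $t$ along the geodesic. Next, choose a smooth family of diffeomorphisms $\Phi_t : \Sigma \to \Sigma$, with $\Phi_{t_0} = \mathrm{id}$ at a chosen basepoint, so that the pulled-back hyperbolic metrics $\tilde{g}(t) := \Phi_t^* g_{\rho(t)}$ have trace-free $t$-derivative at $t_0$. The horizontal lift of $\partial/\partial t$ at $(\sigma(t_0), p)$ is then the tangent vector to the curve $s \mapsto (\sigma(s), \Phi_s(p))$. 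On $N$, declare each fiber to carry its \hym\ and let the horizontal lift be a unit vector orthogonal to fibers. The resulting global Riemannian metric has germ of the form \eqref{H} at each fiber by design.

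Conclusions (1) and (3) are then essentially built into the construction: each fiber carries the \hym\ by definition, and since sectional curvature depends only on the 2-jet of the metric and the germ at each fiber matches the model \eqref{H}, Theorem \ref{thm:g} applies verbatim to compute the {\Sc}s.

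The substantive step is (2), minimality of each fiber. For a metric of the form $g_{ij}(t)\, du^i du^j + dt^2$, the \sff\ of the slice $\{t = t_0\}$ with unit normal $\partial/\partial t$ is $II_{ij} = \tfrac{1}{2}\partial_t g_{ij}|_{t_0}$, so the mean curvature equals $\tfrac{1}{2}\partial_t \log \det g$. Thus minimality of the fiber is equivalent to $\partial_t \tilde{g}|_{t_0}$ being trace-free with respect to $\tilde{g}(t_0) = g_{\sigma_0}$. The family $\Phi_t$ modifies the raw variation by a Lie derivative: if $X_0 := \tfrac{d}{dt}\Phi_t|_{t_0}$, then $\partial_t \tilde{g}|_{t_0}$ equals $\Phi_{t_0}^*(\partial_t g_{\rho(t)})|_{t_0} + \mathcal{L}_{X_0}(g_{\sigma_0})$. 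Taking traces and using $\mathrm{tr}_g \mathcal{L}_X g = 2\,\mathrm{div}(X)$, the minimality condition becomes a Poisson equation $2\,\mathrm{div}(X_0) = -\mathrm{tr}_{g_{\sigma_0}}(\partial_t g_{\rho(t)}|_{t_0})$. This is solvable on the closed surface precisely because the right-hand side has zero integral: by Gauss-Bonnet every \hym\ on $\Sigma$ has area $4\pi(g-1)$, so $\partial_t \int_\Sigma dA_{g_{\rho(t)}} = 0$, and since $\partial_t dA_g = \tfrac{1}{2}\mathrm{tr}_g(\partial_t g)\, dA_g$, the integrated trace vanishes. Integrating the infinitesimal $X_{t_0}$ along $t$ produces the desired family $\Phi_t$, and by construction the resulting horizontal distribution makes each fiber minimal.

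The main obstacle lies in this choice of horizontal lift: a naive identification of nearby fibers (e.g., by the Sampson-Wolf \hmp, or by any fixed marking) will in general introduce a nontrivial conformal-factor contribution to $\partial_t \tilde{g}$ and destroy minimality. The Poisson-equation argument above picks out the correct lift that kills this trace part; it is canonical up to the addition of a Killing field on the fiber, and in genus $g \ge 2$ there are none. One can reformulate the conclusion by noting that $\dot\gamma = \mu_0 = \bar\phi_0/g_\sigma$ is a \hBd, so after the gauge adjustment by $X_0$ the \sff\ of the minimal fiber is identified with $\mathrm{Re}(\phi_0\, dz^2)$, which is the standard characterization of a \mins\ in a hyperbolic \tm\ whose Hopf differential is the \hqd\ tangent to $\gamma$.
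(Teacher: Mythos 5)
Your construction does deliver conclusions (1) and (2): each fiber is hyperbolic by fiat, and the divergence-equation argument for minimality is sound (the solvability condition $\int_\Sigma \mathrm{tr}_{g_{\sigma_0}}(\partial_t g_{\rho(t)})\,dA=0$ does follow from constancy of hyperbolic area). The genuine gap is the sentence ``the resulting metric has germ of the form \eqref{H} at each fiber by design,'' from which you deduce (3). Your gauge condition prescribes only the \emph{trace} of the first variation $\partial_t\tilde g$. It does not force $\partial_t\tilde g=2\,\mathrm{Re}(\phi_0\,dz^2)$: among trace-free lifts the velocity can still differ by $\mathcal{L}_Y g_\sigma$ with $Y$ divergence-free (an infinite-dimensional residual freedom, not just Killing fields --- so your canonicity remark is also incorrect), and your condition puts no constraint at all on the second-order jet $\partial_t^2\tilde g$. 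But the curvatures of Theorem \ref{thm:g} depend on that 2-jet: $K(\frac{\partial}{\partial x},\frac{\partial}{\partial t})=-D(|\mu_0|^2)$ arises from the second variation of the energy density $e''(0)$ in the germ \eqref{H}. Two families both satisfying your trace-free condition can differ at first order by $\mathcal{L}_Y g_\sigma$ and at second order by a further Lie-derivative term, keeping every fiber hyperbolic and minimal while changing $K(\frac{\partial}{\partial x},\frac{\partial}{\partial t})$. Hence neither conclusion (3) nor the assertion that the germ is \eqref{H} --- which is itself part of the statement --- is established; for the same reason your closing identification of the {\sff} with $\mathrm{Re}(\phi_0\,dz^2)$ is unjustified, since you control only its trace and it may contain a term $\tfrac12\mathcal{L}_Y g_\sigma$.

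The paper closes exactly this gap by a stronger gauge plus a comparison result: the deformation tensor is taken to be the scalar-curvature-preserving representative, i.e.\ divergence-free \emph{and} traceless (Fischer--Marsden, Tromba), which in genus $g\ge 2$ (no Killing fields) makes the lift of $\gamma$ to $\Mcal_{-1}$ unique and smooth in $t$, with velocity forced to be $2\,\mathrm{Re}(\phi_0\,dz^2)$; and then Ahlfors' theorem is invoked to show that this family agrees with the harmonic-map germ \eqref{H} \emph{up to second order} along a {\WPg}, which is precisely what allows Theorem \ref{thm:minimal} and Theorem \ref{thm:g} to be applied to the metric \eqref{M}. To repair your proof you would need to upgrade your gauge from ``trace-free'' to transverse-traceless and then supply this second-order comparison with \eqref{H}; the trace-free gauge alone proves only (1) and (2).
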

Generally, it is difficult to carry out calculations on the {\WP} metric on {\TS}. Part of the reason is that the metric itself, 
as well as the curvature tensor formula, are given as an integral (or a sum of integrals) over the underlying {\cs} 
$(\Sigma, \sigma)$, rather than in terms of any global coordinates of {\TS}. In this work, the technical tool we intensively 
rely on is the {\Tt} of {\hmp}s, begun with the pioneer work of Eells-Sampson (\cite {ES64}). This theory has many 
applications in many different areas in geometry and analysis. In the case of compact hyperbolic surfaces, the analytical 
theory does not involve any regularity issue: the degree one {\hmp} between hyperbolic surfaces is a diffeomorphism 
(\cite {SY78}). It has become an important computational tool in {\Tt}, as well as hyperbolic geometry (see, for example, 
\cite {Wol89}, \cite{Jos91a, Jos91b, JY09}, \cite {Min92a, Min92b},  \cite{Hua05, Hua07}).   
\subsection*{Plan of the paper}
We start with a brief collection of preliminary facts in \S 2, where we introduce {\hmp}s between 
hyperbolic surface in \S 2.1, {\TS} and the {\Tc} in \S 2.2, and the {\WPm} in \S 2.3. Theorems \ref{thm:curve} and 
\ref{thm:ft} are proved in \S 3, where we obtain formulas for the curvatures of the {\Tc} for the Riemannian metric $G'$ 
(\eqref{metric}). We will focus on the geometry of the {\WPg} in the last section \S 4, prove the Theorems \ref{thm:g} 
and 1.4.
\subsection*{Acknowledgment}
We wish to express our gratitude to J\"{u}rgen Jost, Yair Minsky for their generous help and interest, and especially 
Michael Wolf and Scott Wolpert for their invaluable suggestions. The research of Huang is partially supported by a 
PSC-CUNY research grant.
\section{Background}
\subsection{Harmonic maps between surfaces}
We review our basic computational scheme, which is based on {\hmp}s between compact hyperbolic surfaces.

Let $w:(\Sigma, g_{\sigma}|dz|^2) \rightarrow (\Sigma, g_{\rho}|dw|^2)$ be a Lipschitz map, where $g_{\sigma}|dz|^2$ 
and $g_{\rho}|dw|^2$ are {\hym}s on the surface $\Sigma$, associated to the {\cs}s $\sigma$ and $\rho$, respectively. 
And $z$ and $w$ are conformal coordinates on $(\Sigma, \sigma)$ and $(\Sigma,\rho)$. We (following \cite {Sam78}) 
define important density functions: the {\it holomorphic energy density}
\begin{equation}
{\mathcal{H}}(z) = {\frac{g_{\rho}}{g_{\sigma}}}|w_z|^2, 
\end{equation}
and the {\it anti-holomorphic energy density}
\begin{equation}
\Lcal(z) =  {\frac{g_{\rho}}{g_{\sigma}}}|w_{\bar{z}}|^2.
\end{equation}

The energy density function of the map $w$ is now simply 
\begin{equation}\label{e}
e(w(z))= \Hcal(z) + \Lcal(z),
\end{equation}
and the {\it total energy} and the {\it Jacobian determinant} of the map are given by
\begin{equation}\label{E}
E(w,\sigma,\rho) = \int_{\Sigma}e(z)g_{\sigma}|dz|^2 = \int_{\Sigma}e(z)dA
\end{equation}
and
\begin{equation}
\Jcal(z) = \Hcal(z) - \Lcal(z),
\end{equation}
respectively. Here $dA$ in \eqref{e} is the area element for $(\Sigma, \sigma)$.

The map $w$ is {\it harmonic} if it is a critical point of this total energy functional \eqref{E}. The $(2,0)$ part of the 
pullback $w^{*}\rho$ is particularly important, and it is called {\it {Hopf differential}} of $w$:
\begin{equation*}
\phi(z)dz^2 = (w^{*}\rho)^{(2,0)} = g_{\rho}w_z {\bar{w}}_zdz^2.
\end{equation*}

It is well-known that there is a unique {\hmp} $w:(\Sigma, \sigma) \rightarrow (\Sigma, \rho)$ in each homotopy class, 
and $w$ is a diffeomorphism with positive Jacobian determinant. 

\subsection{{\TS} and the {\Tc}}
For a closed surface $\Sigma$ of genus $g > 1$, it is a consequence of the Uniformization Theorem that the notion of 
{\cs}s, complex structures, and {\hym}s on $\Sigma$ are equivalent. {\TS} $T_g$ is the space of {\cs}s on $\Sigma$, 
modulo the group of orientation preserving diffeomorphisms isotopic to the identity.

For a {\cs} $\sigma$ on $\Sigma$, it represents a point in $T_g$, and we denote by $z$ its conformal coordinate. We 
routinely use $(\Sigma, \sigma)$ to indicate the marking by a {\cs}. The co{\ts} of $T_g$ at $\sigma$ is identified as 
$Q(\sigma) = \{\phi(z)dz^2: \overline{\partial} \phi =0\}$, the space of {\hqd}s on $(\Sigma, \sigma)$. The {\Hd} of a 
{\hmp} is holomorphic, hence belongs to $Q(\sigma)$, and the map $w$ is conformal if and only if its {\Hd} is $0$. This 
is essentially the entrance of {\hmp} theory to {\Tt}. Note that $Q(\sigma)$ is a Banach space of complex dimension 
$3g-3$, while the space of {\Bd}s is of infinite dimension.

We denote $g_{\sigma}|dz|^2$ the {\hym} on $(\Sigma, \sigma)$, and its Laplacian is 
\begin{equation*}
\Delta_{\sigma} = {\frac{4}{g_{\sigma}}}{\frac{\partial^2}{\partial z \partial \bar{z}}},
\end{equation*}
with nonpositive eigenvalues. We also define an operator $D = -2(\Delta_{\sigma} -2)^{-1}$. It is $L^{2}$-self-adjoint 
with respect to $(\Sigma, g_{\sigma}|dz|^2)$. This operator plays an essential role in understanding the {\WP} 
geometry of {\TS}.

The {\ts} of $T_g$ at $\sigma$ can be identified with the space of {\hBd}s $B_h(\sigma)$. A {\Bd} 
$\mu(z){\frac{d\bar{z}}{dz}}$ on $(\Sigma, \sigma)$ is harmonic if 
$\mu(z){\frac{d\bar{z}}{dz}} = {\frac{\bar{\phi}d\bar{z}^2}{g_{\sigma}dzd\bar{z}}}$ for some $\phi dz^2 \in Q(\sigma)$.

The {\Tc} $\Tcal_g$ is a fiber bundle over {\TS} $T_g$, and the fiber over $\sigma \in T_g$ is the marked surface 
$(\Sigma, \sigma)$. As a manifold of real dimension $6g -4$, every point in $\Tcal_g$ can be represented as 
$(\sigma, z_0)$, where $\sigma \in T_g$ and $z_0 \in (\Sigma, \sigma)$.

 The first Chern class $c_1(v) = \frac{\sqrt{-1}}{2\pi} \Theta$ of the line bundle $\nu$ over $\Tcal_g$ is computed 
 by Wolpert (\cite{Wol86}), where the curvature 2-form $\Theta$ is found to satisfy 
 $\Theta(\ppl{}{z}, \ppl{}{\bar{z}}) = \frac{-2}{(z-\bar{z})^2}$, and $\Theta(\ppl{}{\bar{z}}, \tau_{\mu}) = 0$,  and 
 $\Theta(\bar{\tau}_{\nu}, \tau_{\mu}) = D(\mu\bar{\nu})$. It is negative, therefore one can define a {\km} $-c_1(v)$ 
 on $\Tcal_g$. 
\subsection{The {\WPm} and {\tm}s} 
The {\WP} co-metric is defined on the cotangent space $Q(\sigma)$ by the natural $L^2$-norm:
\begin{eqnarray}
||\phi||_{WP}^2 = \int_{\Sigma} \frac {|\phi|^2}{g_{\sigma}^2}dA_{\sigma}, \forall \phi \in Q(\sigma),
\end{eqnarray}
where $dA_{\sigma}$ is the hyperbolic area element of $(\Sigma,\sigma)$. By duality, we obtain the {\WPm} on $T_g$.

The {\WPg}s are intimately related to the geometry of three manifolds. Given $X, Y \in {T}_g(\Sigma)$, they uniquely 
determine a {\qf} hyperbolic {\tm}, $Q(X,Y)$, with $X$ and $Y$ as conformal boundaries, by the means of 
Bers' simultaneous uniformization (\cite{Ber72}). Brock (\cite{Bro03}) showed that the hyperbolic volume of 
the convex core of $Q(X,Y)$ is quasi-isometric to the length of the {\WPg} joining $X$ and $Y$ in {\TS}. We 
obtained a {\WP} potential from varying {\qf} manifolds in {\qf} space near the Fuchsian locus (\cite{GHW09}).

Many mysterious properties of the {\WP} geometry of {\TS} are largely due to the incompleteness of the metric 
(\cite {Chu76, Wol75}). When a {\WP} geodesic can not be extended, a short simple closed curve on the 
surface is pinched to a single point (\cite{Mas76}), while curvatures on the surface remain hyperbolic. A basic 
property is that the {\WPm} is geodescially convex (\cite {Wol87}), hence any two points can be joined by a 
unique {\WP} geodesic. The {\WP} geometry of {\TS} is quite satisfying: it is a space of negative curvature 
(\cite {Tro86}, \cite {Wol86}). However, there is neither negative upper bound (\cite {Hua05}) nor lower bound 
(\cite {Hua07}, \cite {Sch86}) of the sectional curvatures. We refer more detailed discussions on the {\WP} 
geometry of {\TS} to articles (\cite {Wol03, Wol06, Wol09}).

We want to understand the infinitesimal geometry of the {\WPg}. To this end, we consider a simple {\WPg} $\gamma$ 
determined by the point $\sigma \in T_g$ and the direction $\mu_{\ell}$, where $\mu_{\ell} \in B_h(\sigma)$ is a 
tangent vector at $\sigma$. We denote $N_\sigma$ the germ of a hyperbolic surface associated to the {\cs} at the 
point $\sigma \in \gamma$. The set of these germs over the {\WPg} $\gamma$ form a three-dimensional space. 

\section{Curvature Formulas of the {\Tc}}
We prove theorems 1.1 and 1.2 in this section. Curvatures in the fiber directions are obtained in \S 3.1, and fiber-tangential 
directions follow in \S 3.2.
\subsection{Fiber directions}
In this subsection, we determine the {\Sc} of the {\Tc} $\Tcal_g$ in the fiber directions. The metric $G'$ is given as in 
\eqref{metric}:
\begin{equation*}
G'= g_{\rho(w)}dwd\overline{w}+ \sum h_{\alpha\beta}(w)d\nu^\alpha d\nu^\beta,
\end{equation*}
where the functions $\{h_{\alpha\beta}\}$ satisfy \eqref{h1}: 
\begin{equation*}
\sum h_{\alpha\beta}(z)d\nu^\alpha d\nu^\beta=2\sum_{\ell=1}^{3g-3}((dx^\ell)^2+(dy^\ell)^2), \forall z \in (\Sigma, \sigma).
\end{equation*}
In other words, we only require it to be Euclidean at $w=z$ in $T_g$.

We now proceed to calculate the {\Sc} in the fiber directions, spanned by vectors $\frac{\partial}{\partial x}$ and 
$\frac{\partial}{\partial y}$, where $z =x+\sqrt{-1}y$. 

\begin{proof} [Proof of Theorem \ref{thm:curve}] 

Fixing $\sigma \in T_g$, for any {\hym} $g_{\rho}$ on $\Sigma$, denote the unique 
{\hmp} $w: (\Sigma,\sigma) \rightarrow (\Sigma,g_{\rho}dwd\bar{w})$ in the homotopy class of the identity, one 
obtains the {\Hd} $\phi(z)dz^2$ of $w$, which is holomorphic. Therefore we have a map $\phi$ from {\TS} to 
$Q(\sigma)$. This map is a homeomorphism (\cite {Sam78, Wol89}). Therefore, via the inverse map, $Q(\sigma)$ 
provides global coordinates for $T_g$.

The space of {\hqd}s $Q(\sigma)$ is a Banach space, so for $\phi_{0}dz^{2} \in Q(\sigma)\backslash \{0\}$, and 
$\phi(t) = t\phi_{0}$ is a ray in $Q(\sigma)$. We denote the 
{\hym}s $g_{\rho(t)}|dw(t)|^{2}$ as the points in $T_g$ determined by the ray $\phi(t)$ in $Q(\sigma)$, via the 
Sampson-Wolf Theorem. Here $w(t)$ is the family of {\hmp}s, in the homotopy class of the identity, whose associated 
{\Hd}s are given by $\phi(t) =  \rho(w(t)) w_z(t) {\bar{w}}_z(t)dz^2$.

Clearly, at $t = 0$, $\rho(0) = \sigma$.

For this family of {\hmp}s $w(t): (\Sigma, g_{\sigma}|dz|^2) \rightarrow (\Sigma, g_{\rho(t)}|dw(t)|^{2})$, we pull back 
the metric to find
\begin{eqnarray}\label{pb}
w^{*} g_{\rho(t)}|dw(t)|^{2} &=& 2Re(g_{\rho(w(t))}w_z(t){\bar{w}}_z(t)dz^2) + 
g_{\rho(t)}(|w_{z}(t)|^{2}+ |{\bar{w}}_{z}(t)|^{2}) |dz|^{2} \nonumber \\
&=& \phi(t) dz^{2} + g_{\sigma}e(t)|dz|^{2} + {\bar{\phi}}(t)d{\bar{z}}^{2}, 
\end{eqnarray}
where $e=e(t)$ is the energy density of $w(t)$, as in \eqref{e}.

For $\phi(t)dz^{2}$, we have $\mu(t)\frac{d\overline{z}}{d z} = {\frac{\bar{\phi}(t)d\bar{z}^{2}}{g_\sigma dzd{\bar{z}}}} \in B_h(\sigma)$, a family 
of {\hBd}s. They represent {\tv}s at $\sigma \in T_g$.

Let $\{\phi_1,\cdots,\phi_{3g-3}\}$ be a basis of $Q(\sigma)$ such that \eqref{h1} holds for the functions 
$\{h_{\alpha\beta}\}$ in the definition \eqref{metric} of the metric $G'$ of $\Tcal_g$.

We write $t =(t^1, \cdots, t^{3g-3})$ such that $\phi(t) dz^2=\sum_{\ell=1}^{3g-3}t^{\ell}\phi_{\ell}dz^2$. And we use $|_0$ to 
indicate evaluation at $t^{\alpha} = 0$ for any $\alpha =1, \cdots, 3g-3$.

The variations of the energy density $e(t)$ are calculated by Wolf (\cite{Wol89}) as follows:
\begin{align*}
 e(t)|_0&=1,\\
 \frac{\partial e(t)}{\partial t^\alpha}|_0 &=\frac{\partial e(t)}{\partial \overline{t^{\alpha}}} |_0 =0,\\
 \frac{\partial^2 e(t)}{\partial t^\alpha \partial t^\beta}|_0 &
 =\frac{\partial^2 e(t)}{\partial \overline{t^{\alpha}} \partial \overline{t^{\beta}}}|_0 =0,\\
 \frac{\partial^2e(t)}{\partial t^{\alpha} \partial \overline{t^{\beta}}}|_0 &
 =(D+1)\frac{\phi_\alpha \overline{\phi}_{\beta}}{g_{\sigma}^2}.
\end{align*}
Using real coordinates, we can rewrite the above results as:
\begin{align*}
 e(t)|_0&=1,\\
 \frac{\partial e(t)}{\partial x^\alpha}|_0 &=\frac{\partial e(t)}{\partial y^\alpha } |_0 =0,\\
 \frac{\partial^2 e(t)}{\partial x^\alpha \partial x^\beta}|_0 &=\frac{\partial^2 e(t)}{\partial y^\alpha \partial y^\beta}|_0 
 =(D+1)\frac{2Re(\phi_\alpha \overline{\phi}_{\beta}) }{g_{\sigma}^2},
\end{align*}

Using the pullback \eqref{pb}, the metric $G'$ on $\Tcal_g$ is written as 
\begin{align}\label{pb-r}
G'&=\phi(t)dz^2+g_{\sigma}e(t)dzd\overline{z} + \overline{\phi}(t) d\overline{z}^2+ 
\sum h_{\alpha\beta}(w)d\nu^\alpha d\nu^\beta \nonumber\\
    &=(g_{\sigma}e+ 2Re \phi)dx^2-4(Im\phi)dxdy+(g_{\sigma}e- 2Re \phi)dy^2 + \sum h_{\alpha\beta}(w)d\nu^\alpha d\nu^\beta.
\end{align}

To simply our notation, we denote $R_{1221} = R_{xyyx}$, and utilize Einstein notation to compute this curvature tensor 
as follows:
\begin{align}\label{r}
R_{1221}|_0
&\ =g_{11}(\Gamma_{22,1}^1-\Gamma_{12,1}^1+
\Gamma_{22}^\beta\Gamma_{1\beta}^1-\Gamma_{12}^\beta\Gamma_{2\beta}^1)|_0\nonumber \\
&\ =g_{11}(\Gamma_{22,1}^1-\Gamma_{12,1}^1\nonumber\\
&\ \ \ \ \ \ \ \ \ \ +\Gamma_{22}^1\Gamma_{11}^1+\Gamma_{22}^2\Gamma_{12}^1
+\sum_{\ell=1}^{3g-3}(\Gamma_{22}^{x^\ell}\Gamma_{1x^\ell}^1+\Gamma_{22}^{y^\ell}\Gamma_{1y^\ell}^1)\nonumber\\
&\ \ \ \ \ \ \ \ \ \ -\Gamma_{12}^1\Gamma_{21}^1-\Gamma_{12}^2\Gamma_{22}^1
-\sum_{\ell=1}^{3g-3}(\Gamma_{12}^{x^\ell}\Gamma_{2x^\ell}^1+\Gamma_{12}^{y^\ell}\Gamma_{2y^\ell}^1))|_0.
\end{align}
Here we recall from \eqref{h1} that, at $t=0$, the functions $h_{\alpha\beta}$ satisfy the condition that 
$\sum h_{\alpha\beta}(z)d\nu^\alpha d\nu^\beta=2\sum_{\ell=1}^{3g-3}((dx^\ell)^2+(dy^\ell)^2)$. 

We calculate the values of Christoffel symbols evaluated at $t^\alpha=0$ for any $\alpha$:
$$
\left\{
\begin{matrix}
\Gamma_{11}^{1}=\displaystyle \frac{(g_{\sigma})_1}{2g_{\sigma}}, 
& \Gamma_{11}^{2}= \displaystyle -\frac{(g_{\sigma})_2}{2g_\sigma},
& \Gamma_{11}^{x^\ell}= \displaystyle -\frac{Re\phi_\ell}2,  
& \Gamma_{11}^{y^\ell}= \displaystyle \frac{Im\phi_\ell}2,\\
\\
\Gamma_{12}^{1}=\displaystyle \frac{(g_{\sigma})_2}{2\sigma}, 
& \Gamma_{12}^{2} =\displaystyle \frac{(g_{\sigma})_1}{2\sigma},
& \Gamma_{12}^{x^\ell} = \displaystyle \frac{Im\phi_\ell}2,
& \Gamma_{12}^{y^\ell}= \displaystyle \frac{Re\phi_\ell}2, \\
\\
\Gamma_{22}^{1}=\displaystyle -\frac{(g_{\sigma})_1}{2g_\sigma}, 
& \Gamma_{22}^{2}=\displaystyle \frac{(g_{\sigma})_2}{2g_\sigma}, 
& \Gamma_{22}^{x^\ell}=\displaystyle \frac{Re\phi_\ell}2, 
& \Gamma_{22}^{y^\ell}=\displaystyle -\frac{Im\phi_\ell}2.
\end{matrix}
\right.
$$
Some mixed terms are calculated as follows:
$$
\left\{
\begin{matrix}
\Gamma_{1x^\ell}^{1}=\displaystyle \frac{Re\phi_\ell}{g_{\sigma}}, 
& \Gamma_{1x^\ell}^{2}= \displaystyle -\frac{Im\phi_\ell}{g_{\sigma}},
& \Gamma_{1x^\ell}^{x^\ell}= \displaystyle  0,
& \Gamma_{1x^\ell}^{y^\ell}= \displaystyle 0, \\
\\
\Gamma_{1y^\ell}^{1}=\displaystyle -\frac{Im\phi_\ell}{g_{\sigma}}, 
& \Gamma_{1y^\ell}^{2} =\displaystyle -\frac{Re\phi_\ell}{g_{\sigma}},
& \Gamma_{1y^\ell}^{x^\ell} =\displaystyle 0, 
& \Gamma_{1y^\ell}^{y^\ell} =\displaystyle 0,\\
\\
\Gamma_{2x^\ell}^{1}=\displaystyle -\frac{Im\phi_\ell}{g_{\sigma}}, 
& \Gamma_{2x^\ell}^{2}= \displaystyle -\frac{Re\phi_\ell}{g_{\sigma}},
& \Gamma_{2x^\ell}^{x^\ell}= \displaystyle  0,
& \Gamma_{2x^\ell}^{y^\ell}= \displaystyle 0 ,\\
\\
\Gamma_{2y^\ell}^{1}=\displaystyle -\frac{Re\phi_\ell}{g_{\sigma}},
& \Gamma_{2y^\ell}^{2}=\displaystyle \frac{Im\phi_\ell}{g_{\sigma}},
& \Gamma_{2y^\ell}^{x^\ell}=\displaystyle 0,
& \Gamma_{2y^\ell}^{y^\ell}=\displaystyle 0,
\end{matrix}
\right.
$$
and
$$
\left\{
\begin{matrix}
\Gamma_{x^\ell x^\ell}^{1}=\displaystyle 0, 
& \Gamma_{x^\ell x^\ell}^{2}= \displaystyle 0,\\
\\
\Gamma_{x^\ell y^\ell}^{1}=\displaystyle 0 , 
& \Gamma_{x^\ell y^\ell}^{2} = \displaystyle 0,\\
\\
\Gamma_{y^\ell y^\ell}^{1}=\displaystyle 0 , 
& \Gamma_{y^\ell y^\ell}^{2}= \displaystyle 0.\\
\end{matrix}
\right.
$$
Using that the curvature of the {\hym} $g_{\sigma}(dx^2+dy^2)$ is $-1$, and $g_{11}|_0 = g_{\sigma}$ from \eqref{pb-r}, we have:
\begin{align*}
R_{1221}|_0
&\ =g_{\sigma}(-(\frac{(g_\sigma)_1}{2g_{\sigma}})_1-(\frac{(g_\sigma)_2}{2g_{\sigma}})_2\\
&\ \ \ \ \ \ \ \ \ \ -\frac{((g_\sigma)_1)^2}{4g_{\sigma}^2}+\frac{((g_\sigma)_2)^2}{4g_{\sigma}^2}
+\frac12\sum_{\ell=1}^{3g-3}(\frac{(Re\phi_\ell)^2}{g_\sigma}+\frac{(Im\phi_\ell)^2}{g_{\sigma}})\\
&\ \ \ \ \ \ \ \ \ \ -\frac{((g_\sigma)_2)^2}{4g_\sigma^2}+\frac{((g_\sigma)_1)^2}{4g_\sigma^2}
+\frac12\sum_{\ell=1}^{3g-3}(\frac{(Re\phi_\ell)^2}{g_\sigma}+\frac{(Im\phi_\ell)^2}{g_\sigma}))\\
&=g_{\sigma}(-(\frac{(g_\sigma)_1}{2g_\sigma})_1-(\frac{(g_\sigma)_2}{2g_\sigma})_2+
\frac{1}{g_\sigma}\sum_{\ell=1}^{3g-3}|\phi_\ell|^2 )\\
&\ =-g_{\sigma}^2+\sum_{\ell=1}^{3g-3}|\phi_\ell|^2.
\end{align*}
Therefore the curvature in directions $\frac{\partial}{\partial x}$ and $\frac{\partial}{\partial y}$, at $t=0$, is 
\begin{eqnarray*}
K(\frac{\partial}{\partial x}, \frac{\partial}{\partial y}) = {\frac{R_{1221}}{g_\sigma^2}} &=&
-1+\sum_{\ell=1}^{3g-3}\frac{|\phi_{\ell}|^2}{g_{\sigma}^2}(z_0) \\
&=& -1+ \sum_{\ell=1}^{3g-3}|\mu_\ell|^{2}(z_0).
\end{eqnarray*}
\end{proof} 
The normalization for the metric $G = -c_{1}(v)$ on $\Tcal_g$ provides the following estimates of the curvatures 
in the fiber directions, in particular, it is impossible for the curvatures to be non-positive everywhere in $\Tcal_g$:

\begin{cor}
With respect to the metric $G = -c_{1}(v)$ on $\Tcal_g$, we have
\begin{enumerate}
\item
\cite{Jos91a}: $Sup_{(\sigma,z_0) \in \Tcal_g}K(\frac{\partial}{\partial z}, \sqrt{-1}\frac{\partial}{\partial z}) > 0$;
\item
$Sup_{(\sigma,z_0) \in \Tcal_g}K(\frac{\partial}{\partial z}, \sqrt{-1}\frac{\partial}{\partial z}) \le 9g-10$.
\end{enumerate}
\end{cor}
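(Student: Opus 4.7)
Since part (1) is attributed to Jost, the new content lies in the upper bound in part (2). The plan is to combine the curvature formula
\[
K\!\left(\frac{\partial}{\partial z},\sqrt{-1}\frac{\partial}{\partial z}\right) \;=\; -1 + \sum_{\ell=1}^{3g-3}|\mu_\ell|^2(z_0)
\]
with the pointwise normalization $D(\mu_\alpha\overline{\mu}_\beta)(z_0)=\delta_{\alpha\beta}$, which gives in particular $D(|\mu_\ell|^2)(z_0)=1$ for each $\ell$. It then suffices to prove the pointwise inequality $|\mu|^2 \le 3\, D(|\mu|^2)$ on $(\Sigma,g_\sigma)$ for every {\hBd} $\mu\in B_h(\sigma)$: evaluating at $z_0$ gives $|\mu_\ell(z_0)|^2\le 3$, summing the $3g-3$ terms yields $\sum_\ell|\mu_\ell|^2(z_0)\le 9g-9$, and Jost's formula then produces the bound $K\le 9g-10$.

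The main step is the pointwise inequality. Writing $\mu=\overline{\phi}/g_\sigma$ for the corresponding $\phi\,dz^2\in Q(\sigma)$, the function $\log|\mu|^2 = \log|\phi|^2 - 2\log g_\sigma$ is smooth away from the isolated zeros of $\phi$. Holomorphicity forces $\Delta_\sigma\log|\phi|^2=0$ in that region, while the Gauss equation for the {\hym} (curvature $-1$) gives $\Delta_\sigma\log g_\sigma = 2$; together these yield $\Delta_\sigma\log|\mu|^2 = -4$. Applying $\Delta f = f\,\Delta\log f + f\,|\nabla\log f|^2$ to $f=|\mu|^2>0$ then gives $\Delta_\sigma|\mu|^2 \ge -4|\mu|^2$ away from zeros, and this inequality extends to all of $\Sigma$: at each zero of $\phi$ the function $|\mu|^2$ is smooth with a local minimum, so $\Delta_\sigma|\mu|^2\ge 0 = -4|\mu|^2$ there too.

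To finish, set $u=D(|\mu|^2)$, so that $(-\Delta_\sigma+2)u = 2|\mu|^2$ from $D=-2(\Delta_\sigma-2)^{-1}$. Combining with the previous inequality,
\[
(-\Delta_\sigma+2)\bigl(3u-|\mu|^2\bigr) \;=\; 6|\mu|^2 - (-\Delta_\sigma+2)|\mu|^2 \;\ge\; 6|\mu|^2 - 6|\mu|^2 \;=\; 0,
\]
and the {\maxp} for the strictly positive operator $-\Delta_\sigma+2$ on the closed surface $\Sigma$ forces $3u\ge|\mu|^2$ everywhere. The only subtlety is the behavior at zeros of $\phi$, where $\log|\mu|^2$ blows up; but this is handled by the local-minimum observation above, so once the differential inequality $\Delta_\sigma|\mu|^2 \ge -4|\mu|^2$ is secured, the remainder is a direct application of the {\maxp}.
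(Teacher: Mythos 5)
Your proposal is correct and follows essentially the same route as the paper: Jost's formula $K=-1+\sum_\ell|\mu_\ell|^2(z_0)$ together with the normalization $D(|\mu_\ell|^2)(z_0)=1$ and the pointwise bound $|\mu_\ell|^2\le 3D(|\mu_\ell|^2)$ gives $K\le -1+3(3g-3)=9g-10$, exactly as in the text. The only difference is that the paper simply quotes that pointwise estimate as Lemma 3.2 (attributed to Wolf), whereas you supply a proof of it via $\Delta_\sigma\log|\mu|^2=-4$ off the zeros of $\phi$ and the maximum principle for $-\Delta_\sigma+2$; that argument is sound, including the treatment of the zero set.
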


\begin{proof}
\vskip 0.1in
\noindent
\begin{enumerate}
\item
This is proved in (\cite{Jos91a}). The idea is that the explicit {\kp} of the metric $G$ forces $D(|\mu_\ell|^2)(z_0) =1$ 
for all $\ell = 1, \cdots, 3g-3$, then the lower bound $0$ is a consequence of the {\maxp}.
\item
The upper bound is a consequence of the following point-wise estimate from next Lemma. 
\end{enumerate}
\end{proof}
\begin{lem}\label{thm:w}\cite{Wol08}
For any $\mu(z){\frac{d\bar{z}}{dz}} \in B_h(\sigma)$, and $\forall z \in (\Sigma, \sigma)$, we have the following:
\begin{equation*}
3D(|\mu|^2)(z) \ge |\mu|^2(z).
\end{equation*}
\end{lem}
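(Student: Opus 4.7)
The plan is to combine a Bochner-type inequality for $|\mu|^2$ with the maximum principle applied to a well-chosen auxiliary function. Write $\mu = \bar\phi/g_\sigma$ for some $\phi\,dz^2 \in Q(\sigma)$, set $v = |\mu|^2 = |\phi|^2/g_\sigma^2$, and let $u = D(v)$; the defining relation $D = -2(\Delta_\sigma - 2)^{-1}$ gives $\Delta_\sigma u - 2u = -2v$. The lemma is the pointwise statement $3u - v \ge 0$, so I will apply the maximum principle to the function $w := 3u - v$.

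First I would establish the key pointwise inequality
\begin{equation*}
\Delta_\sigma v + 4v \ge 0 \quad \text{on } \Sigma.
\end{equation*}
Since $\phi$ is holomorphic, $\log|\phi|^2$ is harmonic away from the zero set of $\phi$, and the hyperbolic metric satisfies $\Delta_\sigma \log g_\sigma = 2$. Taking the logarithm of $v = |\phi|^2/g_\sigma^2$ therefore yields $\Delta_\sigma \log v = -4$ on $\Sigma \setminus \phi^{-1}(0)$. Combining this with the identity $\Delta_\sigma \log v = \Delta_\sigma v/v - |\nabla v|_\sigma^2/v^2$ gives
\begin{equation*}
\Delta_\sigma v + 4v = \frac{|\nabla v|_\sigma^2}{v} \ge 0
\end{equation*}
wherever $\phi \ne 0$. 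At a zero of $\phi$, the smooth nonnegative function $v$ attains a local minimum, so $\Delta_\sigma v \ge 0 \ge -4v$ there, and the inequality extends by continuity to all of $\Sigma$.

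The choice of $w = 3u - v$ is engineered so that this Bochner bound transfers cleanly: one computes
\begin{equation*}
(\Delta_\sigma - 2)w = 3(\Delta_\sigma u - 2u) - (\Delta_\sigma v - 2v) = -4v - \Delta_\sigma v \le 0.
\end{equation*}
Because $\Sigma$ is closed, $w$ attains its minimum at some $p \in \Sigma$, where $\Delta_\sigma w(p) \ge 0$. The inequality $\Delta_\sigma w \le 2w$ then forces $w(p) \ge 0$, and hence $w \ge 0$ throughout $\Sigma$. The main conceptual point is identifying the sharp constant $3$: it is exactly the largest value of $c$ for which $(\Delta_\sigma - 2)(cu - v) \le 0$ can be deduced from $\Delta_\sigma v + 4v \ge 0$, so any smaller choice would give a weaker, non-optimal estimate. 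The only place requiring technical care is the behavior of the Bochner identity at zeros of $\phi$, which is resolved by the minimum-point argument just indicated.
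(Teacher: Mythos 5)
Your proof is correct. Note that the paper itself gives no argument for this lemma: it is quoted from Wolf's preprint [Wol08] (the estimate, in the form $(D-\tfrac13)|\mu|^2\ge 0$, goes back to Wolpert's work on the Weil--Petersson curvature tensor), so there is no internal proof to compare against. Your argument --- the identity $\Delta_\sigma \log v=-4$ off the zeros of $\phi$, giving $\Delta_\sigma v+4v=\lvert\nabla v\rvert_\sigma^2/v\ge 0$ with the zero set handled by the local-minimum observation, followed by the maximum principle applied to $3D(v)-v$ using $(\Delta_\sigma-2)D(v)=-2v$ --- is exactly the standard proof in that literature, and all sign conventions match the paper's Laplacian $\Delta_\sigma=\frac{4}{g_\sigma}\partial_z\partial_{\bar z}$. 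One small slip in your closing commentary (not in the proof): $3$ is the \emph{smallest} constant $c$ for which $(\Delta_\sigma-2)(cu-v)\le 0$ follows from $\Delta_\sigma v+4v\ge 0$, since $(\Delta_\sigma-2)(cu-v)\le (6-2c)v$; a smaller $c$ would assert a \emph{stronger} pointwise bound that this comparison cannot deliver, while larger $c$ still works but yields weaker conclusions.
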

\subsection{Fiber-tangential directions} 
To calculate the {\Sc}s spanned by one fiber direction and one tangential direction, we have to complete the mixed 
terms in Christoffel symbols which were not required in the last subsection. For this reason, we require functions 
$\{h_{\alpha\beta}\}$ satisfy \eqref{h2}: 

$$\sum h_{\alpha\beta}(w)d\nu^\alpha d\nu^\beta=2\sum_{\ell=1}^{3g-3}((dx^\ell)^2+(dy^\ell)^2), 
\forall \rho(w) \in T_g.$$

Therefore the metric $G'$ takes the form:
 \begin{equation}\label{metric2}
G'= g_{\rho(w)}dwd\overline{w}+ 2\sum_{\ell=1}^{3g-3}(dx_\ell^2+dy_\ell^2).
\end{equation}
And we are now going to prove the Theorem \ref{thm:ft}.

\begin{proof} [Proof of Theorem \ref{thm:ft}]

We will continue to use notation in the proof of Theorem \ref{thm:curve}. We will only calculate the curvature tensor 
$R_{1x^{\ell}x^{\ell}1} = R_{xx^{\ell}x^{\ell}x}$ at $t=0$, which is given by:
\begin{equation}\label{r2}
R_{1x^{\ell}x^{\ell}1}|_0 = g_{\sigma}(\Gamma_{x^{\ell}x^{\ell},1}^1-\Gamma_{1x^{\ell},x^\ell}^1+
\Gamma_{x^{\ell}x^{\ell}}^\beta\Gamma_{1\beta}^1-\Gamma_{1x^{\ell}}^\beta\Gamma_{x^{\ell}\beta}^1)|_0.
\end{equation}
We have 
\begin{align*}
& \Gamma_{1x^{\ell}}^1=\frac12\frac{g_{\sigma}e- 2 Re \phi}{g_{\sigma}^2 e^2-4 |\phi|^2}
                              (g_{\sigma} \frac{\partial e}{\partial x^{\ell}}+ 2 Re \phi_{\ell})+
                \frac12\frac{2Im\phi            }{g_{\sigma}^2 e^2-4|\phi|^2}
                              (-2Im\phi_{\ell}),\\
& \Gamma_{1y^{\ell}}^1=\frac12\frac{g_{\sigma}e- 2 Re \phi}{g_{\sigma}^2 e^2-4 |\phi|^2}
                              (g_{\sigma}\frac{\partial e}{\partial y^{\ell}}- 2 Im \phi_{\ell})+
                \frac12\frac{2Im\phi            }{g_{\sigma}^2 e^2-4|\phi|^2}
                              (2Re\phi_{\ell}),\\
&\Gamma_{2x^{\ell}}^2=\frac12\frac{2Im\phi            }{g_{\sigma}^2 e^2-4|\phi|^2}
                              (-2Im\phi_{\ell})+
                \frac12\frac{g_{\sigma}e+ 2 Re \phi}{g_{\sigma}^2 e^2-4|\phi|^2}
                              (g_{\sigma}\frac{\partial e}{\partial x^{\ell}}- 2 Re \phi_{\ell}),\\
&\Gamma_{2y^{\ell}}^2=\frac12\frac{2Im\phi            }{g_{\sigma}^2 e^2-4|\phi|^2}
                              (2Re\phi_{\ell})+
                \frac12\frac{g_{\sigma}e+ 2 Re \phi}{g_{\sigma}^2 e^2-4|\phi|^2}
                              (\sigma \frac{\partial e}{\partial y^{\ell}}+ 2 Im \phi_{\ell}).
\end{align*}
And their derivatives at $t=0$ are as follows:
\begin{align}
& \Gamma_{1x^{\ell},x^{\ell}}^1|_0=\Gamma_{2x^{\ell},x^{\ell}}^1|_0=-2\frac{|\phi_{\ell}|^2}{g_\sigma^2}+
                       \frac12 \frac{\partial^2 e(w)}{\partial x^{\ell} \partial x^{\ell}}
                     =(D-1)\frac{|\phi_{\ell}|^2}{g_\sigma^2},\\
& \Gamma_{1y^{\ell},y^{\ell}}^1|_0=\Gamma_{2y^{\ell},y^{\ell}}^1|_0=-2\frac{|\phi_{\ell}|^2}{g_\sigma^2}+
                       \frac12 \frac{\partial^2 e(w)}{\partial y^{\ell} \partial y^{\ell}}
                     =(D-1)\frac{|\phi_{\ell}|^2}{g_\sigma^2}. 
\end{align}
Moreover, because of condition \eqref{metric2}, we can complete the table of Christoffel symbols by:
$$
\left\{
\begin{matrix}
& \Gamma_{x^{\ell}x^{\ell}}^{x^{\ell}}= \displaystyle 0 ,
& \Gamma_{x^{\ell}x^{\ell}}^{y^{\ell}}= \displaystyle 0 , \\
\\
& \Gamma_{x^{\ell}y^{\ell}}^{x^{\ell}} =\displaystyle 0,
& \Gamma_{x^{\ell}y^{\ell}}^{y^{\ell}} =\displaystyle 0,\\
\\
& \Gamma_{y^{\ell}y^{\ell}}^{x^{\ell}}= \displaystyle 0 ,
& \Gamma_{y^{\ell}y^{\ell}}^{y^{\ell}}= \displaystyle 0 .\\
\end{matrix}
\right.
$$
Substituting these terms into \eqref{r2}, we find 
\begin{align*}
R_{1x^{\ell}x^{\ell}1}|_0 &= g_{\sigma}(\Gamma_{x^{\ell}x^{\ell},1}^1-\Gamma_{1x^{\ell},x^\ell}^1+
\Gamma_{x^{\ell}x^{\ell}}^\beta\Gamma_{1\beta}^1-\Gamma_{1x^{\ell}}^\beta\Gamma_{x^{\ell}\beta}^1)|_0\\
&= g_{\sigma}(0-(D-1)\frac{|\phi_{\ell}|^2}{g_\sigma^2}+0-\frac{(Re\phi_{\ell})^2}{g_\sigma^2}-\frac{(Im\phi_{\ell})^2}{g_\sigma^2})\\
&= -g_\sigma D(\frac{|\phi_{\ell}|^2}{g_\sigma^2})(z_0).
\end{align*}
Then the curvature 
\begin{equation*}
K(\frac{\partial}{\partial x}, {\frac{\partial}{\partial x^\ell}}) = \frac{R_{1x^{\ell}x^{\ell}1}}{2g_\sigma}= 
-{\frac{1}{2}}D(|\mu_{\ell}|^2)(z_0).
\end{equation*}
Similarly, one can work out other curvatures:
\begin{equation*}
\frac{R_{1y^{\ell}y^{\ell}1}}{2g_\sigma}=\frac{R_{2x^{\ell}x^{\ell}2}}{2g_\sigma}=\frac{R_{2y^{\ell}y^{\ell}2}}{2g_\sigma}=
-{\frac{1}{2}}D(|\mu_{\ell}|^2)(z_0).
\end{equation*}
They are all bounded from above by $-\frac16 |\mu_{\ell}|^2(z_0)$, hence non-positive.
\end{proof}

\section{Application: The geometry of the {\WPg}}
An application of our method is to study the geometry of the {\WPm} in {\TS}, in particular, three-manifold formed by a 
surface bundle over a {\WPg} $\gamma$. This manifold is the union of germs $N_\sigma$ over $\gamma$, where 
$\sigma \in \gamma$, and the fiber at $\sigma$ is a hyperbolic surface in the conformal class determined by $\sigma$. We always 
assume the curve $\gamma$ is parametrized by its arc length.

Recall the {\Hd}s $\phi(t)dz^2$ of the family of {\hmp}s from $(\Sigma,\sigma)$ to $(\Sigma, g_{\rho(t)}dwd\bar{w})$ 
determine a curve $\rho(t) \in T_g$. It is crucial to us that, when $\phi(t)dz^2$ is a ray in $Q(\sigma)$, i.e., for some 
$\phi_0 dz^2 \in Q(\sigma)\backslash 0$, $\phi(t) = t\phi_0$, then the slice $\rho(t)$ is a {\WP} geodesic at $t=0$ 
(\cite {Ahl61}). This permits calculation of curvatures and the second fundamental form of the fibers of 
$N=\bigcup_\sigma N_\sigma$ by local computation on the germs.

This section is organized in subsections. \S 4.1 contains local calculations where we determine the {\Sc}s of the germ 
$N_\sigma$. The asymptotic behavior of these curvatures near the infinity is also investigated in the subsection; in 
\S 4.2, we study the {\sff} of the fiber hyperbolic surface of $N_\sigma$ at $\sigma$, and show that this fiber is 
minimal; in \S 4.3, we equip the {\sbd} $N=\bigcup_\sigma N_\sigma$ a Riemannian structure and prove the 
Theorem 1.4.

\subsection{Curvatures of the germ $N_\sigma$}
Let $\gamma$ be a {\WPg} arc in {\TS} that passes through $(\Sigma, \sigma)$ and in the direction of the harmonic 
{\Bd} $\mu_0(z){\frac{d\bar{z}}{dz}} =\frac{\bar{\phi}_0d\overline{z}^2}{g_\sigma dz d\overline{z}} \in B_h(\sigma)$, and 
$z = x+\sqrt{-1}y$. The {\WPg} arc $\gamma$ is parametrized by its arc length, so we have 
\begin{equation*}
\|\mu_0\|_{WP}=(\int_{(S,\sigma)}|\mu_0(z)|^2 dA(z))^{\frac12} =1.
\end{equation*}

In this subsection, we focus on local geometry near $\sigma$, i.e., we consider the germ 
$N_\sigma$ over the point $\sigma \in \gamma$, with the metric $H$ as in \eqref{H}:
\begin{equation*}
H = g_{\rho(w)}dwd\overline{w} + dt^2.
\end{equation*}
We obtain {\Sc}s of $N_\sigma$ with the metric $H$ for tangent vectors $\frac{\partial}{\partial x}$, 
$\frac{\partial}{\partial y}$ and $\frac{\partial}{\partial t}$, at $t=0$, and evaluating at 
$z_0 \in (\Sigma,\sigma)$, that is:

\begin{proof} [Proof of Theorem \ref{thm:g}] 

We set up similarly as in the proof of the Theorem \ref{thm:curve}.

For a family of {\hym}s $g_{\rho(w)}dwd\bar{w}$, we have a family of {\hqd}s 
$\phi(t)dz^2 = t\phi_0 dz^2 \in Q(\sigma)$ associated to the {\hmp}s $w(t)$ from $(\Sigma,\sigma)$ to 
$(\Sigma, g_{\rho(w)}dwd\bar{w})$.

The pullback metric on $(\Sigma,\sigma)$ is given by \eqref{pb}:
\begin{equation*}
w^{*} g_{\rho(t)}|dw(t)|^{2} = \phi(t) dz^{2} + g_{\sigma}e(t)|dz|^{2} + {\bar{\phi}}(t)d{\bar{z}}^{2}, 
\end{equation*}
where $e=e(t)$ is again the energy density of $w(t)$, and $\phi(t) =t\phi_0$.

The metric $H$ on the germ $N_\sigma$ is, in real coordinates, the following:
\begin{align*}\label{H-r}
&t\phi_0 dz^2+\sigma e(t)dz d\overline{z}+t\overline{\phi}_0d\overline{z}^2+dt^2  \\
=&\ t\phi_0 (dx^2-dy^2+2idxdy)+ g_\sigma e(t)(dx^2+dy^2)+ t\overline{\phi}_0(dx^2-dy^2-2idxdy)+dt^2 \\
=&\ (g_\sigma e+ 2t Re \phi_0)dx^2-4t Im\phi_0 dxdy+(g_\sigma e- 2t Re \phi_0)dy^2 +dt^2
\end{align*}

The pullback metric on the family of surfaces can be represented by its matrix form as follows:
\begin{equation}\label{fff}
(g_{ij}(t))=\left(
\begin{array}{ccc}
g_\sigma e(t)+ 2t Re \phi_0    & -2t Im\phi_0          \\
-2t Im\phi_0              &g_\sigma e(t)- 2t Re \phi_0 
\end{array}
\right).
\end{equation}
As before, we use indices $1$, $2$ for variables $x$ and $y$, respectively, to simplify the notation in the {\Cs}s. We 
now use index $3$ for the variable $t$ for the same purpose.

The values of relevant {\Cs}s, at $t=0$, can be computed as follows:
$$
\left\{
\begin{matrix}
\Gamma_{11}^{1}=\displaystyle \frac{(g_{\sigma})_1}{2g_{\sigma}}, 
& \Gamma_{11}^{2}= \displaystyle -\frac{(g_{\sigma})_2}{2g_\sigma},
& \Gamma_{11}^3= \displaystyle -Re\phi_0,  \\
\\
\Gamma_{12}^{1}=\displaystyle \frac{(g_{\sigma})_2}{2\sigma}, 
& \Gamma_{12}^{2} =\displaystyle \frac{(g_{\sigma})_1}{2\sigma},
& \Gamma_{12}^3 = \displaystyle Im\phi_0,\\
\\
\Gamma_{22}^{1}=\displaystyle -\frac{(g_{\sigma})_1}{2g_\sigma}, 
& \Gamma_{22}^{2}=\displaystyle \frac{(g_{\sigma})_2}{2g_\sigma}, 
& \Gamma_{22}^3=\displaystyle Re\phi_0,
\end{matrix}
\right.
$$
and
$$
\left\{
\begin{matrix}
\Gamma_{13}^{1}=\displaystyle \frac{Re\phi_0}{g_\sigma}, 
& \Gamma_{13}^{2}=\displaystyle -\frac{Im\phi_0}{g_\sigma}, 
& \Gamma_{13}^3= \displaystyle 0,  \\
\\
\Gamma_{23}^{1}=\displaystyle -\frac{Im\phi_0}{g_\sigma}, 
& \Gamma_{23}^{2} =\displaystyle -\frac{Re\phi_0}{g_\sigma},
& \Gamma_{23}^3 = \displaystyle 0,\\
\\
\Gamma_{33}^{1}=\displaystyle 0, 
& \Gamma_{33}^{2}=\displaystyle 0, 
& \Gamma_{33}^3=\displaystyle 0.
\end{matrix}
\right.
$$
The curvature tensor $R_{1221}$, at $t=0$, is
\begin{eqnarray*}
R_{1221}|_0 &=& g_\sigma ( \Gamma_{22,1}^{1} - \Gamma_{21,2}^{1} + 
\Gamma_{22}^{\beta}\Gamma_{\beta 1}^{1}-\Gamma_{21}^{\beta}\Gamma_{\beta2}^{1})|_0 \nonumber.
\end{eqnarray*}  

Applying the curvature on $(\Sigma,\sigma)$ is $-1$, and above values for {\Cs}s, we have:
\begin{eqnarray*}
R_{1221}|_0
&=& g_\sigma (-g_\sigma + \frac{(Re\phi_0)^2}{g_\sigma} + \frac{(Im\phi_0)^2}{g_\sigma}) \\
&=& -g_\sigma^2+ |\phi_0|^2.
\end{eqnarray*}    
So we find that, at $(\sigma, z_0)$, the curvature in the fiber directions is
\begin{equation*}
K(\frac{\partial}{\partial x}, \frac{\partial}{\partial y})(\sigma, z_0) = \frac{R_{1221}}{g_\sigma^2}= -1+|\mu_0(z_0)|^2.
\end{equation*}  
We are left to determine curvatures $K(\frac{\partial}{\partial x}, \frac{\partial}{\partial t})$ and 
$K(\frac{\partial}{\partial y}, \frac{\partial}{\partial t})$. 

The {\ct} $R_{1331}$ can be computed as follows:
\begin{equation*}
R_{1331}|_0 = g_\sigma ( \Gamma_{33,1}^{1} - \Gamma_{31,3}^{1} + 
\Gamma_{33}^{\beta}\Gamma_{\beta1}^{1}-\Gamma_{31}^{\beta}\Gamma_{\beta3}^{1})|_0,
\end{equation*}
where Einstein notation is employed for $\beta = 1, 2, 3$.

It is easy to verify from the values of the {\Cs}s at $t=0$, that 
\begin{eqnarray*}
\sum_{\beta=1}^3 (\Gamma_{33}^{\beta}\Gamma_{\beta1}^{1}-\Gamma_{31}^{\beta}\Gamma_{\beta3}^{1})|_0
&=& -(\Gamma_{31}^{1})^2|_0 - (\Gamma_{31}^2\Gamma_{23}^1)|_0\\
&=& -|\mu_0|^2,
\end{eqnarray*}
and we apply the second variation of $e(t)$ to find
\begin{eqnarray*}
\Gamma_{13,3}^{1}|_0
&=& \frac12 (-\frac{4(Re\phi_0)^2}{g_\sigma^2} + 2D(|\mu_0|^2) + 2|\mu_0|^2 -\frac{4(Im\phi_0)^2}{g_\sigma^2}) \\
&=& D(|\mu_0|^2) - |\mu_0|^2.
\end{eqnarray*}
Therefore we have 
\begin{eqnarray*}
R_{1331}|_0 &=& g_\sigma ( 0 - D(|\mu_0|^2) + |\mu_0|^2 -|\mu_0|^2) \\
&= & g_\sigma (-D(|\mu_0|^2)),
\end{eqnarray*}
and at $z_0$
\begin{equation*}
K(\frac{\partial}{\partial x}, \frac{\partial}{\partial t})(\sigma, z_0) = \frac{R_{1331}}{g_\sigma}= -D(|\mu_0|^2)(z_0) \le 0.
\end{equation*}  
The last curvature is calculated in the similar fashion and this completes the proof of the Theorem \ref{thm:g}.
\end{proof}

We are particularly interested in the case when $\sigma \in T_g$ travels along a {\WPg} $\gamma$ near the 
infinity of the augmented {\TS}, in which case, at least one simple closed curve on the surface $(\Sigma,\sigma)$ is 
being pinched, as the result, the norm $|\mu|$ is unbounded. Therefore, we find:

\begin{cor}\label{thm:de}
$Sup_{\sigma \in T_g} K(\frac{\partial}{\partial x}, \frac{\partial}{\partial y}) = +\infty$, and 
$Inf_{\sigma \in T_g}K(\frac{\partial}{\partial x}, \frac{\partial}{\partial t}) = -\infty$.
\end{cor}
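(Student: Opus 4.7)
The plan is to reduce both claims to a single pointwise concentration statement for harmonic Beltrami differentials near the boundary of the augmented Teichm\"{u}ller space, then invoke the collar geometry of a pinching geodesic to produce such concentration.

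First I would combine the two curvature formulas in Theorem \ref{thm:g} with Wolpert's pointwise estimate (Lemma \ref{thm:w}). Since
\begin{equation*}
K\bigl(\tfrac{\partial}{\partial x}, \tfrac{\partial}{\partial y}\bigr)(\sigma, z_{0}) = -1 + |\mu_{0}(z_{0})|^{2}, \qquad K\bigl(\tfrac{\partial}{\partial x}, \tfrac{\partial}{\partial t}\bigr)(\sigma, z_{0}) = -D(|\mu_{0}|^{2})(z_{0}),
\end{equation*}
and since Lemma \ref{thm:w} gives $D(|\mu_{0}|^{2})(z_{0}) \ge \tfrac{1}{3}|\mu_{0}(z_{0})|^{2}$, both conclusions will follow once I exhibit a sequence $(\sigma_{n}, z_{n}) \in \Tcal_{g}$ and unit-$\|\cdot\|_{WP}$ harmonic Beltrami differentials $\mu_{n} \in B_{h}(\sigma_{n})$ with $|\mu_{n}(z_{n})| \to \infty$. (Each such $\mu_{n}$ is the initial direction of a bona fide {\WPg} germ through $\sigma_{n}$, so Theorem \ref{thm:g} applies along this germ.)

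Next I would pick a sequence $\sigma_{n} \in T_{g}$ heading to a boundary stratum of the augmented Teichm\"{u}ller space along which a fixed simple closed curve $\alpha \subset \Sigma$ is pinched: its hyperbolic length $\ell_{n} := \ell_{\alpha}(\sigma_{n})$ tends to zero. By the collar lemma, $\alpha$ has a standard embedded collar $C_{n} \subset (\Sigma, \sigma_{n})$ of half-width $\sim \log(1/\ell_{n})$. Working in plumbing (or Fenchel--Nielsen) coordinates around the pinching node, I would take $\phi_{n} dz^{2} \in Q(\sigma_{n})$ to be the canonical quadratic differential dual to $\alpha$; in plumbing coordinates $z$ this looks essentially like $(dz/z)^{2}$ on the neck. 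Using the Masur--Wolpert asymptotic expansions for the hyperbolic metric $g_{\sigma_{n}}$ on $C_{n}$, one checks that $\int_{\Sigma}|\phi_{n}|^{2}/g_{\sigma_{n}}^{2}\,dA$ remains finite, so after rescaling I may assume $\|\phi_{n}/g_{\sigma_{n}}\|_{WP} = 1$. Setting $\mu_{n} = \bar{\phi}_{n}/g_{\sigma_{n}}$ and evaluating at a point $z_{n}$ on the short core geodesic, a direct computation in the plumbing model shows $|\mu_{n}(z_{n})| = O(1/\ell_{n}) \to \infty$.

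Plugging these $(\sigma_{n}, z_{n}, \mu_{n})$ into the two formulas above now gives
\begin{equation*}
K\bigl(\tfrac{\partial}{\partial x}, \tfrac{\partial}{\partial y}\bigr)(\sigma_{n}, z_{n}) = -1 + |\mu_{n}(z_{n})|^{2} \to +\infty, \qquad K\bigl(\tfrac{\partial}{\partial x}, \tfrac{\partial}{\partial t}\bigr)(\sigma_{n}, z_{n}) \le -\tfrac{1}{3}|\mu_{n}(z_{n})|^{2} \to -\infty,
\end{equation*}
which proves the corollary. The main obstacle is the last technical step, namely controlling the pointwise blow-up rate of the $L^{2}$-normalized $\mu_{n}$ on the pinching geodesic; I would expect to either carry this out explicitly in plumbing coordinates using the Masur--Wolpert expansion of $g_{\sigma_{n}}$ on the collar, or to cite the corresponding asymptotic estimates already used in \cite{Hua05} to establish the lack of a negative upper bound on the {\WP} sectional curvature, where exactly this mechanism appears.
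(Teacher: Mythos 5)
Your proposal follows essentially the same route as the paper: the paper's proof simply observes that pinching forces $\sup_{z\in(\Sigma,\sigma)}|\mu(z)|=+\infty$ and then combines the formulas of Theorem \ref{thm:g} with Lemma \ref{thm:w}, exactly as you do, with your plumbing-coordinate discussion merely filling in the asserted blow-up. (One minor quantitative quibble: what you need is a lower bound rather than an $O(\cdot)$ bound, and for the WP-unit-normalized differential the value on the core geodesic grows like $\ell_n^{-1/2}$ rather than $1/\ell_n$ --- but this does not affect the argument.)
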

\begin{proof}
When $\sigma \in T_g$ is near the infinity of the augmented {\TS}, a short essential curve on $(\Sigma,\sigma)$ 
is being pinched, and therefore $Sup_{z \in (\Sigma,\sigma)}|\mu(z)| = +\infty$. Corollary \ref{thm:de} is then the 
consequence of this and the Lemma \ref{thm:w}. 
\end{proof}

Naturally one hopes that $N_\sigma$ is negatively curved. Theorem 1.3 gives a negative answer to this. Therefore, 
as often seen in Riemannian geometry, it is natural to modify a given metric for better property. On the germ 
$N_\sigma$, based on the metric $H$ in \eqref{H}, we consider the modified metric $H_f$ as follows:
\begin{equation}
H_f = g_{\rho(w)}dwd\overline{w} + f(t)dt^2,
\end{equation}
where $f(t) > 0$ and $f(0) =1$. Proceeding as in the proof of Theorem \ref{thm:g}, it is easy to show the following:
\begin{pro}
The curvature $K(\frac{\partial}{\partial x}, \frac{\partial}{\partial y})$ of the {\WPg} $\gamma$ in the fiber directions, 
with respect to the metric $H_f$, at $t=0$, and $z=z_0$ is equal to $-1+|\mu_0|^2(z_0)$. Therefore 
$(N_\sigma, H_f)$ is not negatively curved.
\end{pro}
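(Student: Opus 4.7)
The strategy is to re-run the computation of $R_{1221}|_0$ from the proof of Theorem \ref{thm:g}, observing that the modification from $H$ to $H_f$ changes only the $(3,3)$-entry of the metric matrix (replacing $1$ by $f(t)$), leaving the $2\times 2$ block of \eqref{fff} intact. Since $f(0)=1$ and $f$ depends only on $t$, we have $g^{33}|_0 = 1$ and $\partial_1 f = \partial_2 f \equiv 0$, so the inverse metric at $t=0$ agrees with the $H$-case in every entry needed below.

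The plan is then to walk through the ingredients of
\[
R_{1221}|_0 = g_{11}\bigl(\Gamma_{22,1}^1 - \Gamma_{12,1}^1 + \Gamma_{22}^\beta\Gamma_{1\beta}^1 - \Gamma_{12}^\beta\Gamma_{2\beta}^1\bigr)\Big|_0
\]
and verify that none of them is altered. The symbols $\Gamma_{ij}^k$ with $i,j,k\in\{1,2\}$, together with their $x$- and $y$-derivatives, depend only on the first $2\times 2$ block and are therefore identical to the $H$-computation. The symbols $\Gamma_{ij}^3$ for $i,j\in\{1,2\}$ reduce at $t=0$ to $-\tfrac{1}{2}g^{33}\partial_t g_{ij}$, and since $g^{33}|_0=1$ they coincide with those in the proof of Theorem \ref{thm:g}. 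Likewise $\Gamma_{3j}^1$ and $\Gamma_{3j}^2$ for $j\in\{1,2\}$ depend only on $\partial_t g_{j\ell}$ because the off-diagonal block $g_{j3}$ vanishes identically. The only symbol that actually picks up $f$ is $\Gamma_{33}^3 = \tfrac{1}{2}f'(t)/f(t)$, and this symbol never enters $R_{1221}|_0$.

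Consequently $R_{1221}|_0$ equals its $H$-value $-g_\sigma^2 + |\phi_0|^2$, and $g_{11}g_{22}-g_{12}^2|_{(\sigma,z_0)}=g_\sigma^2$ is also unchanged; dividing gives the advertised formula $K(\partial_x,\partial_y)(\sigma,z_0) = -1 + |\mu_0(z_0)|^2$. The secondary assertion, that $(N_\sigma, H_f)$ is not negatively curved, follows by the same mechanism as in Corollary \ref{thm:de}: along a {\WPg} $\gamma$ approaching the boundary of the augmented {\TS}, a pinching curve forces $\sup_{z_0}|\mu_0(z_0)|^2$ to become arbitrarily large, so the fiber sectional curvature is positive at some point of $N_\sigma$ regardless of how $f$ is chosen. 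The only thing to be careful about, and really the main (mild) obstacle, is the bookkeeping in the previous paragraph: confirming that $\Gamma_{33}^3$, the unique new nonzero symbol produced by $f$, does not appear in the fiber curvature expression.
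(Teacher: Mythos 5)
Your proposal is correct and takes essentially the same route as the paper, which proves the proposition only by the remark ``proceeding as in the proof of Theorem \ref{thm:g}'': your bookkeeping --- the metric stays block diagonal, $f$ depends only on $t$ with $f(0)=1$, so every Christoffel symbol entering $R_{1221}|_0$ coincides with the $H$-case and the single new symbol $\Gamma_{33}^3=\tfrac{1}{2}f'/f$ never appears --- is exactly the verification the paper leaves implicit. Your justification of the final assertion via the blow-up of $\sup_{z}|\mu_0(z)|$ near the boundary of the augmented Teichm\"uller space is the same mechanism the paper invokes in Corollary \ref{thm:de}, so nothing is missing.
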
 

\subsection{Minimality: the germ $N_\sigma$}
We now consider how the fiber hyperbolic surface $(\Sigma, g_\sigma |dz|^2)$ interacts with the 
germ $N_\sigma$. Naturally we study the {\sff}.

Since the $t$-direction is perpendicular to each surface, the {\sff} $(h_{ij}(t))$ can be calculated as 
$$
(h_{ij}(t))=\left(
\begin{array}{ccc}
g_\sigma e'(t)+ 2 Re \phi_0    & -2 Im\phi_0          \\
-2 Im\phi_0              &g_\sigma e'(t)- 2 Re \phi_0 
\end{array}
\right).
$$
Here $e'(t)$ is the $t-$derivative of the energy density $e(t)$ of the family of {\hmp}s $w(t)$.

Since $e(0)=1$ and $e'(0)=0$, when evaluated at $t=0$, we have 
\begin{equation}
(h_{ij}(0))=\left(
\begin{array}{ccc}
2 Re \phi_0    & -2 Im\phi_0          \\
-2 Im\phi_0              &- 2 Re \phi_0 
\end{array}
\right).
\end{equation}
Now the principal curvatures of surface at time $t=0$ are the eigenvalues of the following matrix:
\begin{equation*}
(g_{\sigma})^{-1}(h_{ij}(0))=
\frac{1}{g_\sigma}
\left(
\begin{array}{ccc}
2 Re \phi_0    & -2 Im\phi_0          \\
-2 Im\phi_0              &- 2 Re \phi_0 
\end{array}
\right).
\end{equation*}
Its two eigenvalues are now:
\begin{equation}
\lambda = \pm2\frac{|\phi_0|}{g_\sigma} =\pm2 |\mu_0|.
\end{equation}

Therefore we have proved:
\begin{theorem}\label{thm:minimal}
Each fiber hyperbolic surface $(S, g_\sigma dzd\bar{z})$ is a minimal surface of the germ $N_\sigma$.
\end{theorem}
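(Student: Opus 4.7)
The plan is to compute the mean curvature of the fiber hyperbolic surface $(\Sigma, g_\sigma |dz|^2)$ inside $(N_\sigma, H)$ and to show that it vanishes identically. Because the metric $H = g_{\rho(w)} dw d\bar{w} + dt^2$ has block-diagonal form with $\partial/\partial t$ a unit vector orthogonal to every slice $\{t=\mathrm{const}\}$, the coordinate field $\partial/\partial t$ is the unit normal to each fiber, and consequently the second fundamental form of the fiber at parameter $t$ is a constant multiple of $\partial_t g_{ij}(t)$, where $g_{ij}(t)$ is the induced fiber metric recorded in \eqref{fff}.

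I would then differentiate \eqref{fff} in $t$ and evaluate at $t=0$, using the first-variation identity $e'(0)=0$ from \S 3.1 (valid because $w(0)$ is the identity harmonic map and $\phi(0)=0$) to cancel the pure-trace contribution $g_\sigma\, e'(0)\, I_2$. What remains is the traceless symmetric $2\times 2$ matrix with entries $\pm 2\, Re\,\phi_0$ on the diagonal and $-2\, Im\,\phi_0$ off-diagonal, exactly the matrix $(h_{ij}(0))$ displayed just before the theorem. Since the induced fiber metric at $t=0$ is $g_\sigma I_2$, the shape operator $g_\sigma^{-1}(h_{ij}(0))$ is itself traceless, so the mean curvature
\begin{equation*}
\tfrac{1}{2}\,\mathrm{tr}\bigl(g_\sigma^{-1}(h_{ij}(0))\bigr) \;=\; 0
\end{equation*}
vanishes at every $z_0 \in (\Sigma,\sigma)$, which is precisely the required minimality.

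Diagonalizing the same traceless symmetric matrix also records the two principal curvatures as $\lambda = \pm 2|\phi_0|/g_\sigma = \pm 2|\mu_0|$, giving a quantitative measure of how each hyperbolic fiber bends inside $N_\sigma$ in the direction of the {\WPg}. The only non-trivial input beyond elementary linear algebra is the first-variation identity $e'(0)=0$ from the {\hmp} theory, and once that is granted there is no real obstacle. Conceptually the minimality is simply reflecting the fact that the leading-order $t$-variation of the pullback metric \eqref{pb} is purely the tracefree part $\phi_0\, dz^2 + \overline{\phi_0}\, d\bar{z}^2$ of a {\hqd}, carrying no pure-trace (conformal) contribution to the first-order deformation of the fiber metric.
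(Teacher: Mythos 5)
Your proposal is correct and follows essentially the same route as the paper: both identify $\partial/\partial t$ as the unit normal, compute the second fundamental form as (a constant multiple of) the $t$-derivative of the pullback metric \eqref{fff}, use $e'(0)=0$ to remove the trace part, and conclude from the traceless matrix $(h_{ij}(0))$ --- equivalently the principal curvatures $\pm 2|\mu_0|$ --- that the mean curvature vanishes. No gaps; your closing remark that minimality reflects the tracefree nature of the Hopf-differential contribution is exactly the point the paper makes implicitly.
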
 
We note that the {\pc}s are unbounded if $\sigma$ is near the infinity of the augmented {\TS}.
\subsection{Minimality: surface bundle $N$ over $\gamma$}
Let $\gamma(t)$ be a {\WPg} arc, for $0 \leq t \leq T$, parametrized by its arc length. We denote the collection 
of germs $N_\sigma$ for $\sigma \in \gamma(t)$ by $N$, and it is clear that the three-manifold 
$N = \Sigma \times [0,T]$ is a surface bundle over $\gamma(t)$. In this subsection, we prove the Theorem 1.4.

\begin{proof} [proof of Theorem 1.4] We now equip $N$ with a Riemannian structure. We wish to equip $N$ with a 
metric of the form
\begin{equation}\label{M}
g_{\gamma(t)}dw(t)d\bar{w}(t) + dt^2
\end{equation}
where $g_{\gamma(t)}$ is the {\hym} in the conformal class of $\gamma(t)$.

Let $\Mcal_{-1}$ be the space of {\hym}s on a topological surface $S$, and recall that {\TS} is the space of {\hym}s 
on $S$, up to orientation-preserving diffeomorphisms in the homotopy class of the identity: $T_g = \Mcal_{-1}/D_0$, 
where $D_0$ is the identity component of the diffeomorphism group.

Let $[g_{\gamma(t)}]$ denote the fiber over $g_{\gamma(t)} \in T_g$. At any $g_1 \in [g_{\gamma(t)}]$, the {\ts} 
$T_{g_1}\Mcal_{-1}$ is identified as $Sym(0,2)$, the space of symmetric $(0,2)$-tensors on $S$. Let 
$h_1 \in Sym(0,2)$ be the symmetric $(0,2)$-tensor which induces a deformation of $g_1$ preserving the scalar 
curvature.  It is divergence-free and traceless, and moreover the deformation is smoothly dependent in $t$ by 
Theorem A of \cite{FM75}, or Theorems 2.4.2 of  \cite{Tro92}. In our case, $g_1$ is the hyperbolic metric 
$\sigma\in [g_{\gamma(t)}]$ and $h_1$ is the holomorphic quadratic differential in the tangent direction 
$\phi_0 dz^2$, and the smooth dependence results in a $C^\infty$ Riemannian metric on $N$ such that the germs 
at each fiber is identical to $N_\sigma$.

Note that the metric \eqref{M} agrees with metric \eqref{H} associated to $\rho(t)$ up to second order (\cite {Ahl61}), 
and now parts (1) and (2) follow from Theorem 4.3, and part (3) follows from the Theorem 1.3.
\end{proof}
By a theorem of Sullivan (\cite{Sul79}), any compact Riemannian manifold with a taut foliation admits a Riemannian 
metric such that each leaf of the foliation is a minimal surface. Theorem 1.4 shows that the $3$-manifold associated 
to a closed {\WPg} is such an example:
\begin{cor}
If $\gamma$ is a closed {\WPg} loop in moduli space $\mathcal{M}(S)$, then the metric \eqref{M} on the associated 
{\tm} $N$ is a Sullivan metric.
\end{cor}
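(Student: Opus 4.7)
The plan is to verify the two defining properties of a Sullivan metric on the closed three-manifold $N$: that the surface-bundle foliation is taut, and that each leaf is a minimal submanifold with respect to the metric \eqref{M}. Given these, the conclusion follows directly from Sullivan's theorem quoted just above the corollary.

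First I would describe $N$ precisely. A closed {\WPg} loop in $\mathcal{M}(S)$ lifts to a {\WPg} arc $\gamma\colon[0,T]\to T_g$ for which there is a mapping class $\phi \in \mathrm{MCG}(S)$ with $\phi\cdot\gamma(0) = \gamma(T)$, carrying the initial tangent direction to the terminal one. The associated 3-manifold $N$ is then the mapping torus $\Sigma\times[0,T]/(x,0)\sim(\phi(x),T)$, a closed $3$-manifold fibering over $S^{1}$.

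Next I would descend the metric \eqref{M} to $N$. The hyperbolic representative $g_{\gamma(t)}$ in the conformal class of $\gamma(t)$ and the arc-length parameter $t$ are both canonically associated to the point $\gamma(t)\in \mathcal{M}(S)$, hence invariant under the {\mcg}. Smoothness of the gluing across $\phi$ follows from the smooth dependence of the hyperbolic representative on the conformal class already used in the proof of Theorem 1.4 (via the results of Fischer--Marsden \cite{FM75} and Tromba \cite{Tro92}). Thus \eqref{M} determines a $C^{\infty}$ Riemannian metric on $N$ whose germ at each fiber is of the form $N_{\sigma}$.

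Finally I would check Sullivan's hypotheses for this metric. The fibers of $N\to S^{1}$ form a codimension-one foliation of the closed $3$-manifold, and since $N$ itself fibers over the circle this foliation is taut: any integral curve of $\partial/\partial t$ is a closed loop meeting every leaf transversally. By part (2) of Theorem 1.4, each fiber is a minimal surface in $N$ with respect to \eqref{M}. Both Sullivan conditions are satisfied, so \eqref{M} is a Sullivan metric. I expect the main obstacle to be checking that the descent across the gluing by $\phi$ is genuinely smooth rather than merely continuous; once that is in hand, the rest reduces to the tautness of any surface-bundle foliation and a direct invocation of Theorem 1.4.
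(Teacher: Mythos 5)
Your proposal is correct and follows essentially the same route as the paper: lift the closed loop to a {\WPg} arc in Teichm\"{u}ller space, glue $\Sigma\times[0,T]$ by the mapping class identifying the endpoint data, check that the metric \eqref{M} descends smoothly across the gluing, and quote Theorem 1.4 for minimality of the fibers. Your extra remark making the tautness of the fibration foliation explicit is a harmless addition that the paper leaves implicit.
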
 
\begin{proof}
The {\WPg} loop lifts to a path of hyperbolic metrics $\gamma(t)$ in $\Mcal_{-1}$, as in the proof of Theorem 1.4, such 
that the hyperbolic surfaces $\Sigma_0=\Sigma\times\{0\}$ and $\Sigma_1=\Sigma\times \{T\}$ are isometric by an 
orientation-preserving mapping class $\psi:\Sigma_0\to \Sigma_1$. We glue the boundary of $\Sigma \times [0,T]$ 
equipped with the metric \eqref{M} by $\psi$ to obtain a surface bundle $N$. Since the normal direction is 
$\partial/ \partial t$, the normal vectors $\partial/\partial  t$ on $\Sigma_0$ match up with the normal vectors on 
$\Sigma_1$, and the resulting metric on $N$ is smooth. The property of all the fibers being minimal follows from 
Theorem 1.4.
\end{proof}
\begin{rem}
\noindent
\begin{enumerate}
\item
The minimality of the germs does not require $\gamma \subset T_g$ to be a geodesic. It is essentially due to the 
property that deformations $h_1$ can be chosen as traceless. Being a {\WPg} allows us to apply the method of 
{\hmp}s to calculate the curvatures of $N$.
\item
Each fiber of $N$ has the same surface area. It is possible to provide an expression for the associated calibration 
$\omega$, a $2$-form of co-mass $1$, on $N$. It is the hyperbolic area form when it is restricted on each fiber and 
it is closed since the first variation of the area along a {\WPg} vanishes (\cite{Ahl61},\cite{Wol86}).
\end{enumerate}
\end{rem}

\end{document}